\newtheorem{theorem}{Theorem}[section]
\newtheorem{lemma}[theorem]{Lemma}
\newtheorem{corollary}[theorem]{Corollary}
\newtheorem{proposition}[theorem]{Proposition}
\theoremstyle{definition}
\theoremstyle{remark}
\numberwithin{equation}{section}
\newcommand{\T}{\mathcal{T}}
\newcommand{\storus}{S^1 \times D^2}
\newcommand{\laurent}{\mathbb{Z}[A,A^{-1}]}
\newcommand{\G}{\mathcal{G}}
\newcommand{\A}{\mathcal{A}}
\newcommand{\f}{\mathcal{F}}
\newcommand{\D}{\mathcal{D}}
\newcommand{\e}{\mathcal{H}}
\newcommand{\vareps}{\varepsilon}
\newcommand{\ds}{\displaystyle}
\newcommand{\ev}[1]{I_#1^\text{even}}
\newcommand{\od}[1]{I_#1^\text{odd}}
\DeclareMathOperator{\Tet}{Tet}
\begin{document}

\title[Even and odd Kauffman bracket ideals]{Even and odd Kauffman bracket ideals for genus-1 tangles}

\author{Susan M. Abernathy}
\address{Department of Mathematics\\
Angelo State University\\
ASU Station \#10900\\
San Angelo, TX 76909\\
USA}
\email{susan.abernathy@angelo.edu}
\thanks{The first author was supported as a research  assistant by  NSF-DMS-1311911}
\urladdr{http://www.angelo.edu/faculty/sabernathy/}

\author{Patrick M. Gilmer}
\address{Department of Mathematics\\
Louisiana State University\\
Baton Rouge, LA 70803\\
USA}
\email{gilmer@math.lsu.edu}
\thanks{The second author was partially supported by  NSF-DMS-1311911}
\urladdr{www.math.lsu.edu/\textasciitilde gilmer/}

\subjclass[2010]{57M25}
\date{August 22, 2016}


\begin{abstract}

This paper refines previous work by the first author.  We study the question of which links in the 3-sphere can be obtained as closures of a given 1-manifold in 
an unknotted
solid torus
in the 3-sphere
(or genus-1 tangle) 
by adjoining another 1-manifold in the complementary solid torus.  We distinguish between even and odd closures, and define even and odd versions of the Kauffman bracket ideal. These even and odd Kauffman bracket ideals are used to obstruct even and odd tangle closures. 
Using a basis of Habiro's for the even Kauffman bracket skein module of the solid torus, we define bases for the even and odd skein module of the solid torus relative to two points.
These even and odd bases allow us
to compute a finite list of generators for the even and odd Kauffman bracket ideals of a genus-1 tangle.  We do this explicitly for three examples.  Furthermore, we use the even and odd Kauffman bracket ideals to conclude in some cases that the determinants of all even/odd closures of a genus-1 tangle possess a certain divisibility.

\end{abstract}

\keywords{tangles, tangle embedding, determinants, Kauffman bracket skein module}


\maketitle

\tableofcontents

\section{Introduction}\label{intro}

Let $M\subseteq S^3$ be a compact, oriented 3-manifold with boundary. Then an $(M,2n)$-tangle is 1-manifold with $2n$ boundary components properly embedded in $M$.  We refer to $(\storus,2)$-tangles  
where $\storus$ is a unknotted solid torus in $S^3$ 
as genus-1 tangles.

An $(M,2n)$-tangle $\T$ embeds in a link $L\subseteq S^3$ if there exists a complementary 1-manifold $\T^\prime$ with $2n$ boundary components in $S^3-Int(M)$ such that upon gluing $\T^\prime$ to $\T$ along their boundaries, we obtain a link isotopic to $L$.  Such a link is called a closure of $\T$. We refer to $\T^\prime$ as the complementary 1-manifold of the closure.  Note that if $\T$ is a genus-1 tangle, then $\T^\prime$ is also a genus-1 tangle.   The focus of this paper is genus-1 tangle embedding.

In \cite{ab, thesis}, the first author defined  the notion of even and odd closures for any genus-1 tangle $\G$ 
with respect to a longitude $l$ on the boundary of the solid torus which misses the boundary points of $\T$. 
 If we choose $l$ to be the longitude pictured in Figure \ref{fig:krebes}  
 and assume that the boundary points of $\T$ are in the complement of $l$,  then we may think of even and odd closures 
 intuitively as follows.  Even (respectively, odd) closures are those whose complementary 1-manifold passes through the hole of the solid torus containing $\G$ an even (respectively, odd) number of times.  For the remainder of this paper, when we discuss even 
and odd closures, we mean even and odd with respect to the longitude $l$.

In \cite{abkb,thesis}, the first author defined the Kauffman bracket ideal of an $(M,2n)$-tangle $\T$  to be the ideal $I_\T$ of $\laurent$ generated by the reduced Kauffman bracket polynomials of all closures of $\T$.
 This ideal gave an obstruction to embedding.  
In the case $(M,2n)= (B^3,4)$, this ideal was first studied by Przytycki,  Silver and  Williams \cite{psw}.

The first author outlined a method for computing this ideal in the case of genus-1 tangles 
 using skein theory techniques.  In this paper, we define an even and odd version of the   Kauffman bracket ideal for genus-1 tangles.  The even Kauffman bracket ideal of a genus-1 tangle $\G$ is the ideal $\ev{\G}$ generated by the reduced Kauffman bracket polynomials of all even closures of $\G$.  The odd Kauffman bracket ideal $\od{\G}$ is defined similarly.  If an ideal is equal to $\laurent$, we refer to that ideal as trivial.

The following proposition is an immediate consequence of these definitions.

\begin{proposition}
Let $\G$ be a genus-1 tangle.  If $\ev{\G}$ (respectively, $\od{\G}$) is non-trivial, then the unknot is not an even (respectively, odd) closure of $\G$.  More generally, if $L$ is an even (respectively, odd) closure of $\G$, then the  reduced Kauffman bracket polynomial of $L$ must lie in $\ev{\G}$ (respectively, $\od{\G}$).
Finally, we have that $I_\G = \ev{\G} + \od{\G}$. 
\end{proposition}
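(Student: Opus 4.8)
The plan is to observe that both assertions follow directly from unwinding the definition of $\ev{\G}$ and $\od{\G}$, together with the fact that the reduced Kauffman bracket polynomial of the unknot equals $1$. No machinery beyond the definitions is needed.

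First I would establish the "more general" assertion, which is really the content of the statement. By definition, $\ev{\G}$ is the ideal of $\laurent$ generated by the set $\{\langle L\rangle : L \text{ is an even closure of }\G\}$, where $\langle L\rangle$ denotes the reduced Kauffman bracket polynomial of $L$. Hence if $L$ is any even closure of $\G$, then $\langle L\rangle$ is one of the generators of $\ev{\G}$ and therefore $\langle L\rangle \in \ev{\G}$. The argument for odd closures and $\od{\G}$ is verbatim the same.

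For the first assertion I would argue by contraposition. Suppose the unknot $U$ is an even closure of $\G$. Since the reduced Kauffman bracket polynomial of the unknot is $\langle U\rangle = 1$, the preceding paragraph gives $1 \in \ev{\G}$, so $\ev{\G} = \laurent$; that is, $\ev{\G}$ is trivial. Equivalently, if $\ev{\G}$ is non-trivial then $U$ is not an even closure of $\G$. Replacing "even" by "odd" throughout yields the statement for $\od{\G}$.

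There is no genuine obstacle here; the only point deserving a word of care is the normalization convention, namely that the reduced Kauffman bracket is normalized so that its value on the unknot is $1$ (equivalently, the usual Kauffman bracket divided by $-A^{2}-A^{-2}$). With that convention fixed, the two paragraphs above constitute the complete proof, and the proposition is indeed immediate from the definitions as claimed.
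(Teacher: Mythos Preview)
Your proof is correct and is exactly what the paper intends: it states that the proposition ``is an immediate consequence of these definitions'' and gives no further argument, which is precisely the unwinding you carry out. The only cosmetic point is that the paper writes $\langle L\rangle'$ for the reduced bracket (reserving $\langle L\rangle$ for the unreduced one), but since you explicitly declare your notation this causes no confusion.
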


In \S\ref{section:kbsm}, we recall the basics of  Kauffman bracket skein modules.
In \S\ref{section:evenodd}, we define bases for the even and odd Kauffman bracket skein modules of $\storus$ relative to two points. These even and odd bases are defined  in terms  of a basis for the even skein module of the solid torus due to Habiro \cite{hab}.

In \S\ref{section:graphbasis}, we recall
the graph basis defined in \cite{abkb,thesis}.    In \S\ref{section:method}, we outline a method for computing a finite list of generators for the even and odd Kauffman bracket ideals
 of genus-1 tangles  with two boundary points.

 We note that if the ordinary Kauffman bracket ideal is non-trivial, then both the even and odd Kauffman bracket ideals must be non-trivial.  
 However, the converse is not true. In \S\ref{section:examples}, 
 we examine some specific examples. 
We
  show that Krebes's, tangle $\A$  \cite{kr},   pictured in Figure \ref{fig:krebes}, has trivial even ideal and non-trivial odd ideal. In \cite{abkb,thesis}, we showed that the ordinary Kauffman bracket ideal of Krebes's tangle $\A$ is trivial. 
 We give an example  of a rather simple genus-1 tangle, $\D$ in Figure \ref{fig:smallex}   which has non-trivial even but trivial odd Kauffman bracket ideals. See Figure \ref{fig:smalltangletrivialclosure} for an odd closure 
  of $\D$ 
  which is trivial.
 We 
  also consider a 
  particularly interesting tangle $\e$ (in Figure \ref{fig:85ex}).
 This tangle has non-trivial even ideal and non-trivial odd ideal. Thus it does not posses a a trivial closure, but    
 the ordinary Kauffman bracket ideal of $\e$ is trivial.

  The determinant $\det(L)$ of a link $L$ is a classical link with well-known alternative definitions. On the one hand, this invariant is the absolute value of the determinant of a Seifert matrix for $L$ symmetrized. It can also be described as the order of the first homology group of the double branched cover of $S^3$ along $L$ (this is interpreted to be zero if this homology group is infinite). In \cite{ab}, the first author used the homology of double branched covers to show that any odd closure of Krebes's tangle has determinant divisible by $3$. Here  we can reach this result as a consequence of our calculation of the odd Kauffman 
 bracket ideal Krebes's tangle. 
We also obtain similar results for other tangles in the same way. Ultimately
this approach to
the determinants of closures 
rests
  on Jones' observation \cite[Corollary 13]{J} that his polynomial evaluated at $t=-1$ is the determinant (up to sign), and Kauffman's bracket 
  polynomial
  description \cite{K} of the Jones polynomial.
In  \S\ref{section:determinant}, we relate the even and odd ideals of an genus-1 tangle to the determinants of even and odd closures of that tangle.

\begin{figure}\labellist
\small\hair 2pt
\endlabellist
\includegraphics[height=1in]{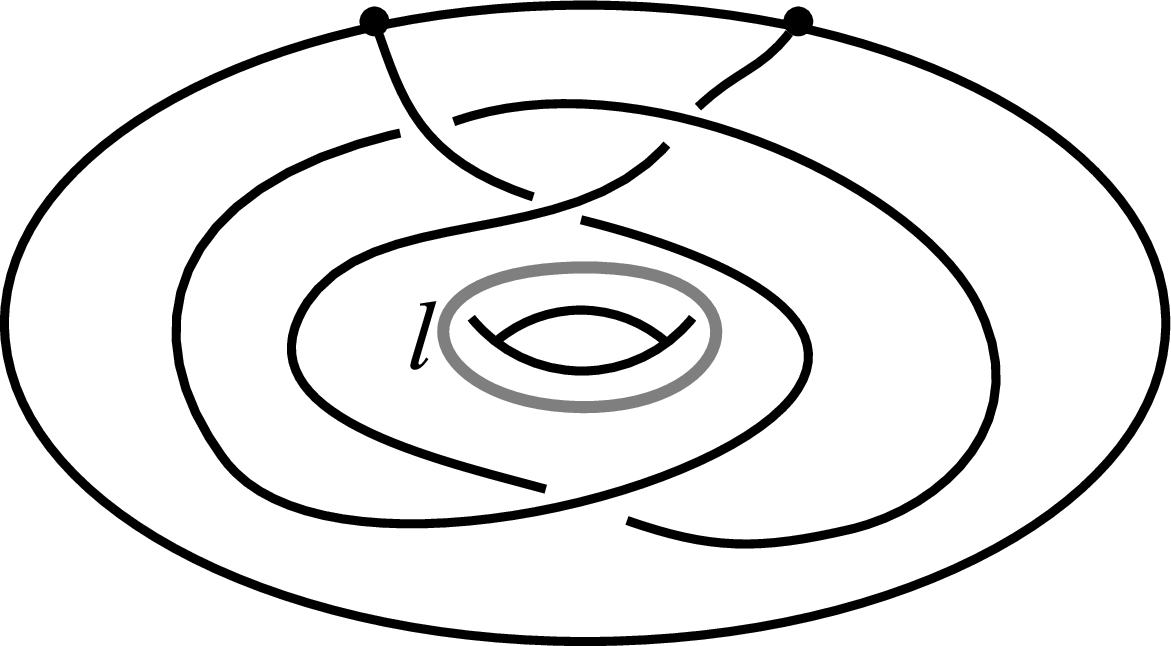}\caption{Krebes's tangle $\A$  and a longitude $l$.}
\label{fig:krebes}
\end{figure}


\section{Kauffman bracket skein modules}\label{section:kbsm}

The Kauffman bracket polynomial of a framed link $D$, denoted by $\langle D \rangle$, is an element of $\laurent$ given by the following three relations, where $\delta = -A^2-A^{-2}$:
\begin{enumerate}[(i)]
\item $\displaystyle \langle\begin{minipage}{.5in}\begin{center}\includegraphics[width=.4in]{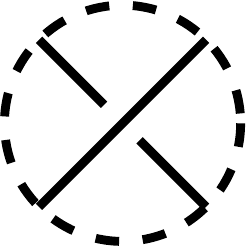}\end{center}\end{minipage} \rangle= A \langle\begin{minipage}{.5in}\begin{center}\includegraphics[width=.4in]{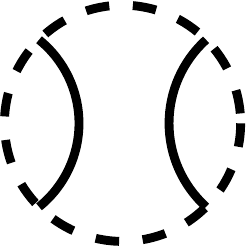}\end{center}\end{minipage} \rangle+ A^{-1}\langle\begin{minipage}{.5in}\begin{center}\includegraphics[width=.4in]{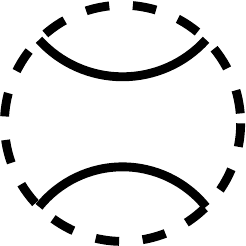}\end{center}\end{minipage} \rangle   $
\item $ \langle D^\prime \coprod \begin{minipage}{.4in}\begin{center}\includegraphics[width=.3in]{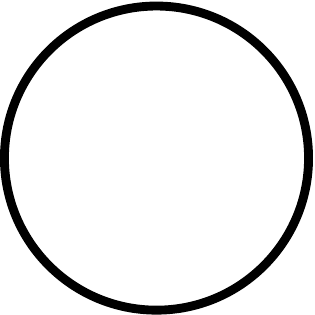}\end{center}\end{minipage} \rangle= \delta \langle D^\prime\rangle.$
\item $ \langle \begin{minipage}{.1in}\begin{center}\end{center}\end{minipage} \rangle= 1.$
\end{enumerate}
We let $\langle D \rangle ^\prime$ denote the reduced Kauffman bracket polynomial of $D$; that is, 
where 
$\langle D \rangle ^\prime = \langle D\rangle/ \delta$.

The Kauffman bracket skein module of a 3-manifold $M$ is the $\laurent$-module $K(M)$ generated by isotopy classes of framed links in $M$ modulo the Kauffman bracket relations above.

Of particular concern to us is the relative Kauffman bracket skein module.  Let $M$ be a compact oriented 3-manifold with boundary and a set of $m$ specified marked 
framed
 points on $\partial M$.  Then the Kauffman bracket skein module of $M$ relative to the $m$ marked points is the $\laurent$-module $K(M,m)$ generated by isotopy classes of framed 1-manifolds  with boundary the marked 
framed
 points modulo the above Kauffman bracket relations.  We can view any genus-1 tangle (equipped with the blackboard framing) as a skein element in $K(\storus,2)$.

As in \cite{abkb, thesis}, we generalize the Hopf pairing on $K(\storus)$ defined in \cite{bhmv92} to obtain the relative Hopf pairing $\langle\text{ , }\rangle: K(\storus,2) \times K(\storus,2) \rightarrow K(S^3) = \laurent$.  Given $a$ and $b$ in $K(\storus,2)$, we let $$\langle a, b \rangle =  \left\langle\hskip.05in\begin{minipage}{1in}\includegraphics[width=1in]{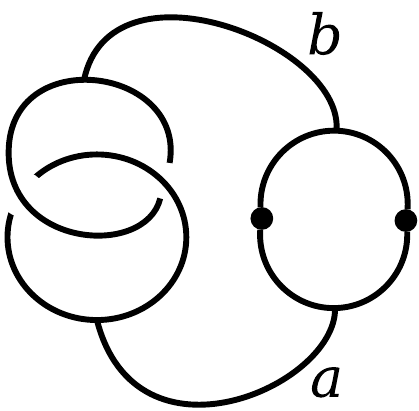}\end{minipage}\hspace{.04in}\right\rangle$$ where $a$ and $b$ lie in regular neighborhoods of the trivalent graphs.

If a genus-1 tangle $\G$ embeds in a link $L\subseteq S^3$, then we can describe this tangle embedding using the relative Hopf pairing.  We have that $\langle L \rangle = \langle\G,\G^\prime\rangle$ for some $\G^\prime \in K(\storus,2)$.


\section{Even and odd relative skein modules}\label{section:evenodd}

As in \cite{bhmv92},   we let $z$ denote a standard banded core  of $\storus$ and the element this core represents in  $K(\storus)$.  
A basis for  $K(\storus)$ is given by $\{z^n\}_{n \ge 0}$. 
As described in \cite{hab}, one can obtain a $\mathbb{Z}_2$-graded algebra structure on the Kauffman bracket skein module $K(\storus)$ by letting $K^\text{even}(\storus)$ be the subalgebra of $K(\storus)$ generated by $z^2$ and $K^\text{odd}(\storus)$ be $zK^\text{even}(\storus)$. Then, 
one has
that $K(\storus) = K^\text{even}(\storus) \oplus K^\text{odd}(\storus)$. This is because  the Kauffman skein relations respect $\mathbb{Z}_2$-homology classes   \cite[p.105]{gh}.

Suppose now that $\storus$ is equipped with two marked framed points in $\partial(\storus)$ and an  essential curve $l$ in $\partial(\storus)$  missing the marked points and which bounds a disk $\mathfrak D$ in $\storus$. 
 Let  $u$ be a framed 1-manifold in $\storus$ with the two given marked points as boundary. Then we say that $u$ is even (respectively, odd), if $u$ intersects $\mathfrak D$ an even (respectively, odd) number of times.  
Let $K^\text{even}(\storus,2)$ and $K^\text{odd}(\storus,2)$ be the submodules of $K(\storus,2)$ generated by all even and odd 1-manifolds, respectively.  Then, we have that $K(\storus,2) = K^\text{even}(\storus,2) \oplus K^\text{odd}(\storus,2)$. 
 Note that if $L$ is an even closure of a genus-1 tangle $\G$, then the Kauffman bracket polynomial of $L$ can be written as $\langle L \rangle = \langle \G,\G^\prime\rangle$ where
 $\G \in K(\storus,2)$ and 
  $\G^\prime \in K^\text{even}(\storus,2)$.  Here $l$ is a ``longitude'' for the first copy of  $\storus$ and  a ``meridian'' for the second copy of 
 $\storus$.The analogous statement is true for odd closures. 

\begin{figure}
\includegraphics[height=1.2in]{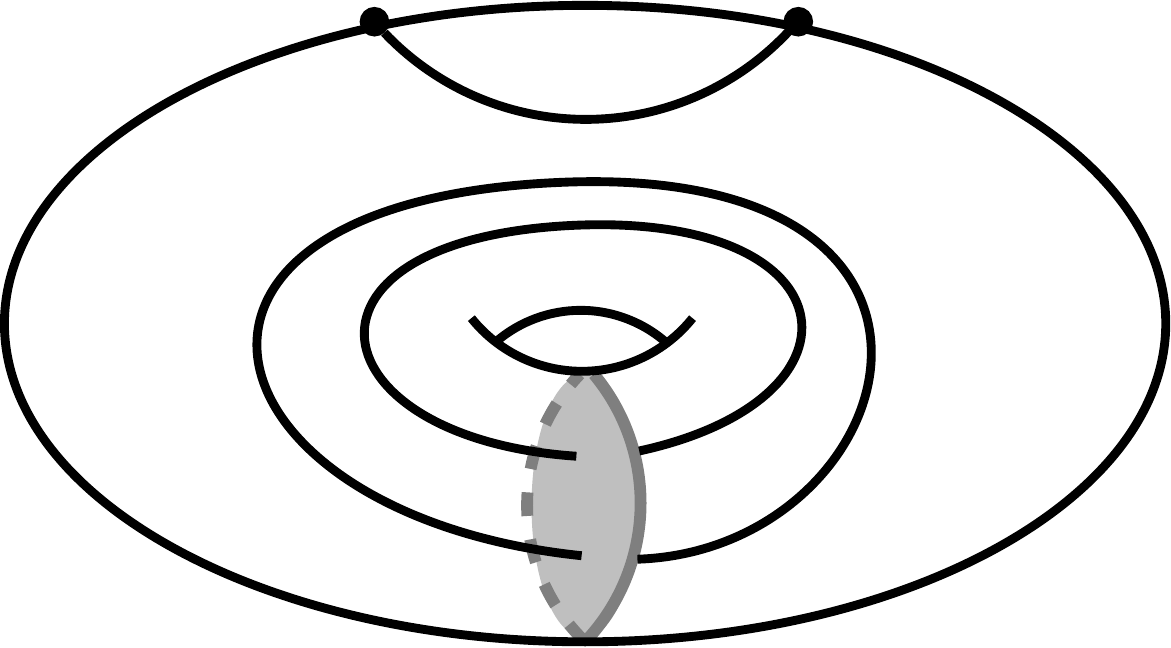}
\caption{An even element of $K(\storus,2)$, where $\mathfrak D$ is the shaded disk}\label{fig:evenskeinelt}
\end{figure}

In \cite{bhmv92},  a basis $\{Q_n\}_{n \ge 0}$ for $K(\storus)$ is given. It is orthogonal  with respect to the Hopf pairing. Here $Q_n =\ds \prod_{i=0}^{n-1} (z-\phi_i)$, where $\phi_i = -A^{2i+2}-A^{-2i-2}$ (in \cite{bhmv92} and elsewhere this is denoted $\lambda_i$).   
In the case $n=0$, we interpret the empty product  as the identity which is represented by the empty link.  
In \cite{hab}, Habiro modifies the definition of $Q_n$ 
 to obtain a new basis for the even submodule $K^\text{even}(\storus)$ given by $S_n = \ds\prod_{i=0}^{n-1} (z^2-\phi_i^2)$ for $n\geq 0$.  Note that $S_n = Q_n\ds \prod_{i=0}^{n-1}(z + \phi_i)$.

We adapt Habiro's basis to obtain bases for the $K^\text{even}(\storus,2)$ and $K^\text{odd}(\storus,2)$.  We refer to them as the even basis and odd basis, respectively, and 
define them as follows
 (using the same $\mathfrak D$ as pictured in Figure \ref{fig:evenskeinelt}).
The even basis consists of the following elements, where $n\geq 0$: $$x_n^\text{even} =  \begin{minipage}{1in}\includegraphics[width=1in]{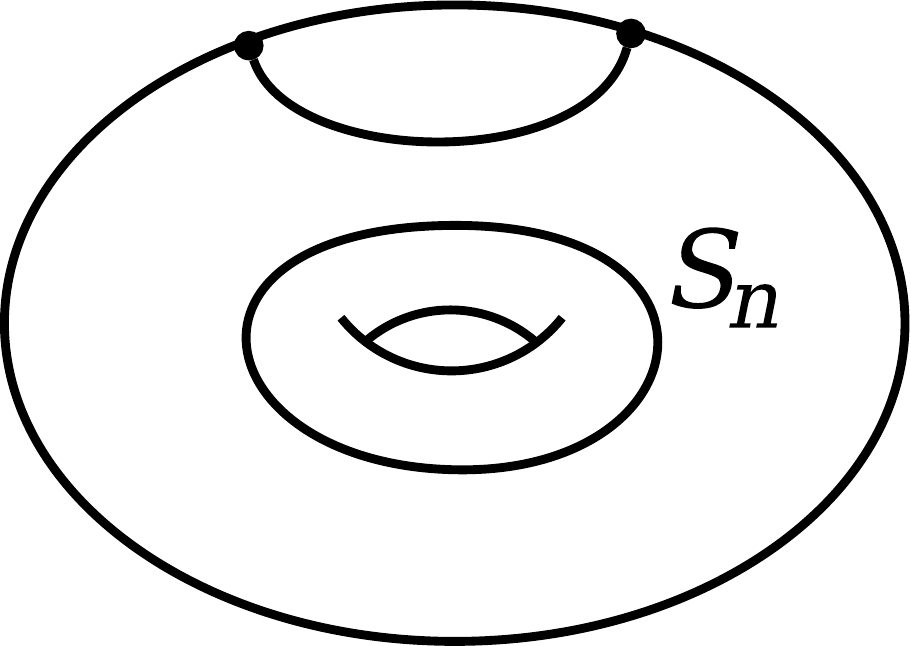}\end{minipage}\text{ and }y_n^\text{even} =  \begin{minipage}{1.03in}\includegraphics[width=1in]{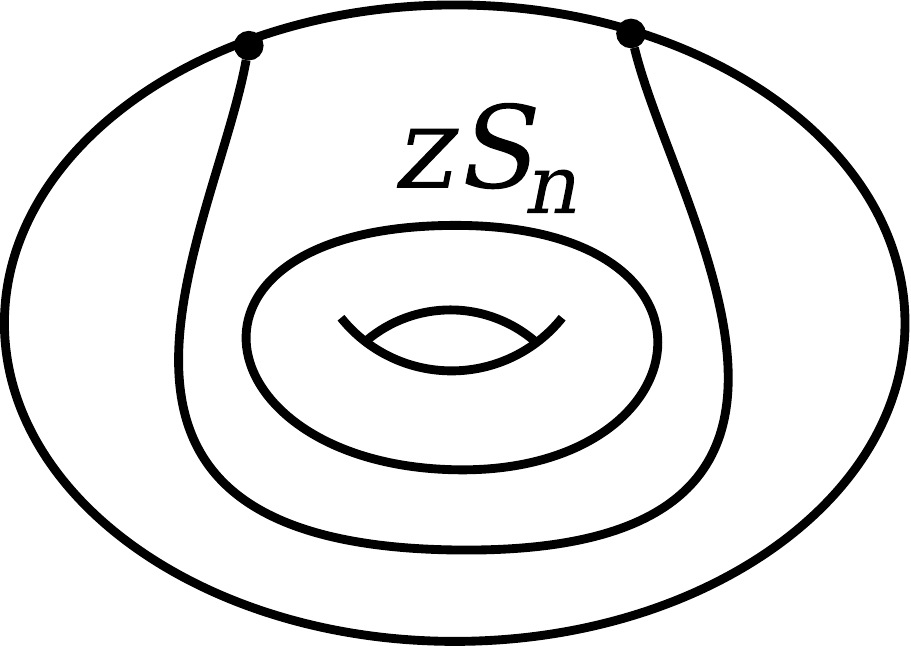}\end{minipage}.$$  Similarly, the odd basis consists of the following elements, where $n\geq 0$: $$x_n^\text{odd} =  \begin{minipage}{1in}\includegraphics[width=1in]{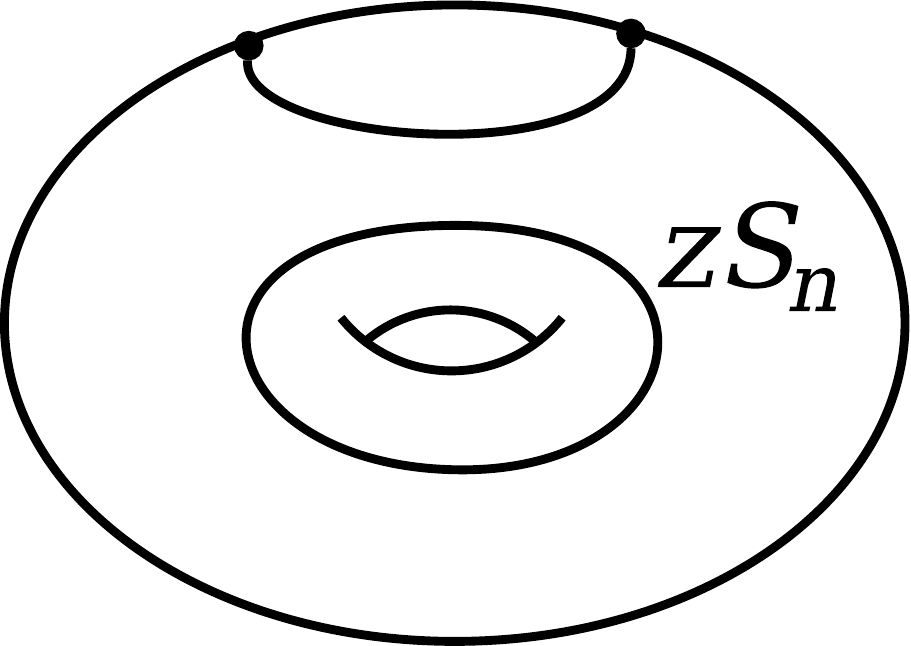}\end{minipage}\text{ and }y_n^\text{odd} =  \begin{minipage}{1.03in}\includegraphics[width=1in]{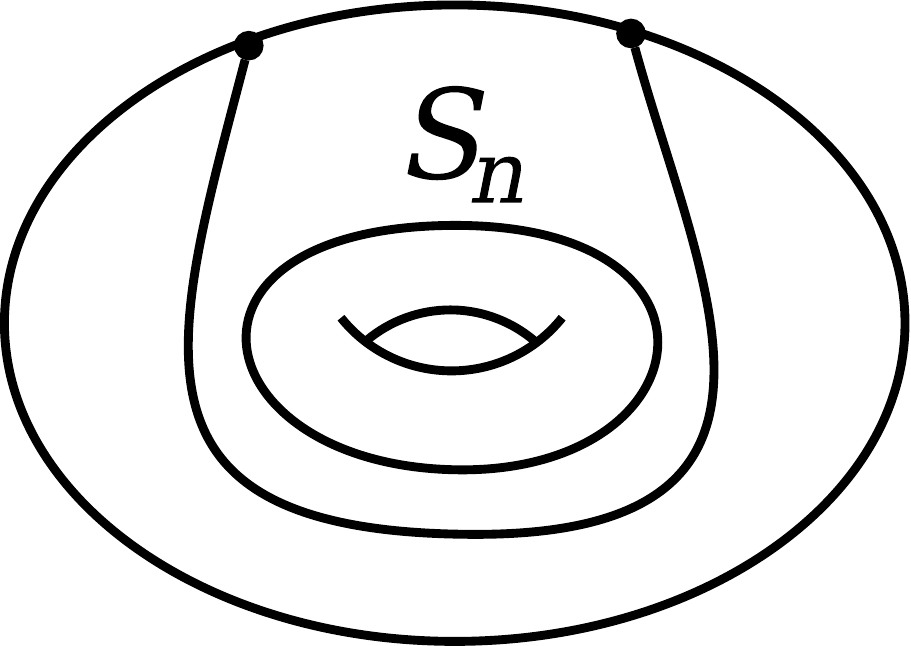}\end{minipage}.$$

That these are bases follows ultimately from the basis for $K(\storus,2)$ consisting of framed links described by isotopy classes of diagrams without crossings and without contractible  loops in $S^1 \times D^2$. One also uses the fact that there is a triangular  unimodular change of basis matrix over $\laurent$ relating the bases $\{S_n\}$ and \{$z^{2m} \}$ for $K^\text{even}(\storus)$.


\section{Graph basis of $K_R(\storus,2)$}\label{section:graphbasis}

Trivalent graphs will be interpreted as in \cite{abkb, thesis, gh, kl, mv,l}.
  Any unlabelled edge is assumed to be colored one.
The colors of the three edges incident to a single vertex must form an admissible triple.  Given non-negative integers $a$, $b$, and $c$, the triple $(a,b,c)$ is admissible if $|a-b| \leq c\leq a + b$ and $a + b + c \equiv 0 (\text{mod } 2)$.  We use the notation of  \cite{kl}: $\Delta_n$, $\theta (a,b,c)$, $\Tet \begin{bmatrix} a & b & e\\ c & d & f  \end{bmatrix}$, and $\lambda^{a \text{ }b}_c$.

We use the graph basis defined in \cite{abkb,thesis}.  Given a pair of non-negative integers $(i,\vareps)$ such that $\vareps = i+1$ or $\vareps = i-1$, let \[g_{i,\vareps} =\hspace{.05in}\begin{minipage}{1.25in}\includegraphics[width=1.25in]{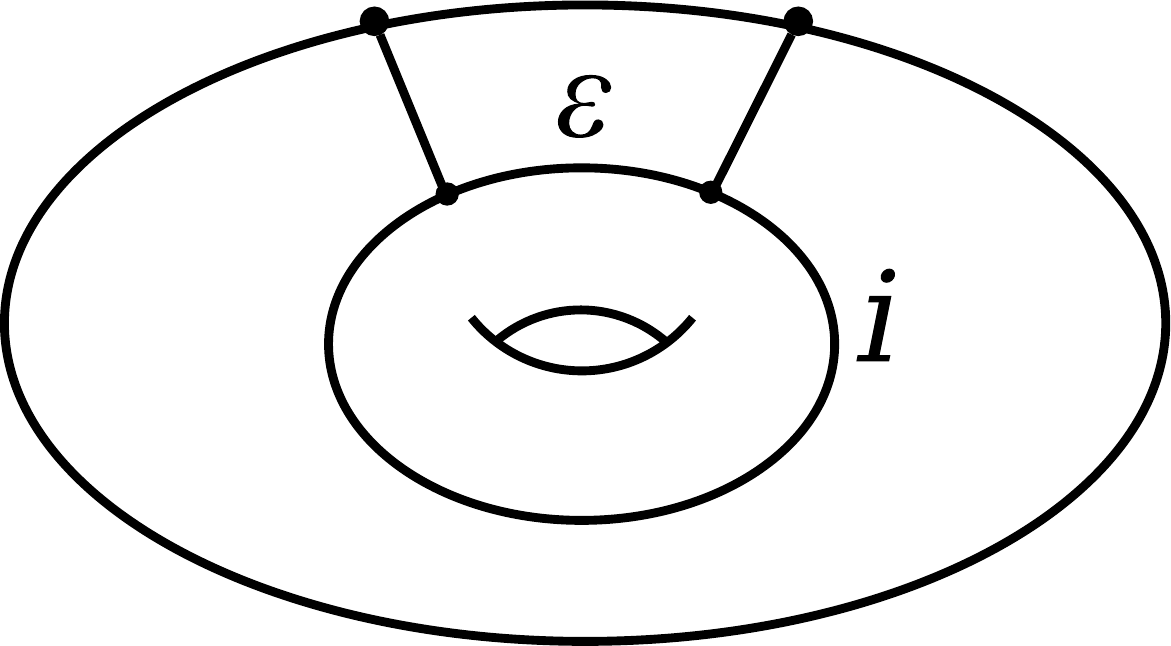}\end{minipage}.\]  

Let $R$ denote the ring $\laurent$ localized by inverting $A^{k}-1$ for all natural numbers $k$, and let $K_R(M,m)$ denote the Kauffman bracket skein module of $M$ relative to $m$ points with coefficients in $R$.  According to \cite[Theorem 2.3]{pr}, we have that $K_R(M,m) = K(M,m) \otimes R$, so we can essentially view $K(M,m)$ as a subset of $K_R(M,m)$.  We make this distinction because when computing a finite list of generators for the even and odd Kauffman bracket ideals, we 
pass through
 $K_R(\storus,2)$ when using the doubling pairing 
defined in \cite[\S2.3]{abkb}. However, each of the generators we obtain is in fact an element of $K(\storus,2)$.

Recall, according to \cite{hp}, $K_R(S^1\times S^2)/\text{torsion}$ is isomorphic to $R$. Let $\psi: K_R(S^1\times S^2) \rightarrow R$ be the epimorphism that sends the empty link to $1 \in R.$
The doubling pairing is defined to be the symmetric pairing $\langle \text{ , }\rangle_D: K_R(\storus,2) \times K_R(\storus,2) \rightarrow R$ obtained by gluing two solid tori containing skein elements 
together via a certain orientation-reversing homeomorphism to obtain a skein element in $S^1\times S^2$, and evaluating this skein element under $\psi.$  Figure \ref{fig:doublingpairing} illustrates the doubling pairing of two graph basis elements. The thick dark colored loop indicates where a $0$-framed surgery is to be performed, converting $S^3$ to $S^1\times S^2$.
According to \cite[Theorem 2.4]{abkb}, the graph basis is orthogonal with respect to the doubling pairing.

\begin{figure}
$$\langle g_{i,\vareps}, g_{i^\prime,\vareps^\prime}\rangle_D =\left\langle \begin{minipage}{1.2in}\includegraphics[height=1in]{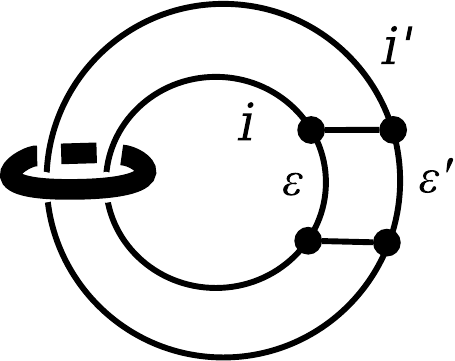}\end{minipage}\right\rangle$$
\caption{The doubling pairing of two graph basis elements. The bold loop indicates a 0-framed surgery.}\label{fig:doublingpairing}
\end{figure}


\section{Applications to genus-1 tangle embedding}\label{section:method}

Let $\G$ be a genus-1 tangle.  
The Kauffman bracket polynomial of any even closure $L$ of $\G$ can be written as $\langle L \rangle = \langle \G,\G^\prime\rangle$ where $\G^\prime \in K^\text{even}(\storus,2)$.
So, $\langle \G,x^\text{even}_n\rangle/\delta$ and $\langle \G,y^\text{even}_n\rangle/\delta$ form a generating set for $I_\G^\text{even}$.
Similarly, $\langle \G,x^\text{odd}_n\rangle/\delta$ and $\langle \G,y^\text{odd}_n\rangle/\delta$ form a  generating set for $I_\G^\text{odd}$.
 We will see in this section that these generating sets are finite.

  We follow the same basic procedure as in \cite{abkb, thesis} to obtain finite lists of generators for $I_\G^\text{even}$ and $I_\G^\text{odd}$. First, we write $\G$ as a linear combination of graph basis elements $\G = \sum c_{i,\vareps}g_{i,\vareps}$.  Since the graph basis is orthogonal, we have that $c_{i,\vareps} = \langle \G, g_{i,\vareps} \rangle_D / \langle  g_{i,\vareps} , g_{i,\vareps} \rangle_D$ and only finitely many $c_{i,\vareps}$ are non-zero.

We then use this linear combination to compute the relative Hopf pairing of $\G$ with the even (respectively, odd) basis to obtain a generating set for $I_\G^\text{even}$ (respectively, $I_\G^\text{odd}$).  The following results allow us to compute the relative Hopf pairing of the graph basis with the even and odd bases.  The proof of Lemma \ref{lemma:removings} below  is similar to that of  \cite[Lemma 4.1]{abkb}.

\begin{lemma}\label{lemma:removings}
$$\begin{minipage}{.75in}\includegraphics[height=.8in]{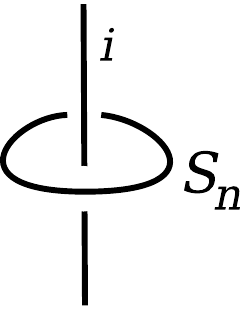}\end{minipage} = \ds\prod_{k=0}^{n-1}(\phi_i^2 - \phi_k^2) \hspace{.04in}\begin{minipage}{.3in}\includegraphics[height=.65in]{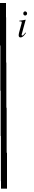}\end{minipage}$$
If $n=0$, we interpret 
$\ds\prod_{k=0}^{n-1}(\phi_i^2 - \phi_k^2)$ as $1$.
\end{lemma}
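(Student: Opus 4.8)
The plan is to read the displayed equation as an \emph{encircling eigenvalue} statement: the meridian loop decorated by the skein element $S_n\in K^\text{even}(\storus)$, when it encircles a strand colored $i$, should act as multiplication by the scalar $\ds\prod_{k=0}^{n-1}(\phi_i^2-\phi_k^2)$. This is the form in which the picture actually arises: when one computes the relative Hopf pairing of a graph basis element $g_{i,\vareps}$ against $x_n^\text{even}$, $y_n^\text{even}$, $x_n^\text{odd}$ or $y_n^\text{odd}$, the ``core'' loop carrying $S_n$ in one solid torus is glued so as to encircle an $i$-colored edge of $g_{i,\vareps}$, and the lemma records the resulting scalar.

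First I would recall the basic encircling fact used in the proof of \cite[Lemma 4.1]{abkb}: a meridian of a strand colored $i$ decorated by the core $z$ (a single $1$-colored loop) equals $\phi_i=-A^{2i+2}-A^{-2i-2}$ times the bare strand. This is exactly the eigenvalue underlying the definition of $Q_n$ in \cite{bhmv92}; it follows from the fusion rule $1\otimes i\cong(i-1)\oplus(i+1)$ together with the values of $\theta$ and $\Delta$ recorded in \cite{kl}, and it is immediate for $i=0$, where the loop becomes a disjoint unknot and the scalar is $\delta=\phi_0$. Since multiplication in $K(\storus)$ is realized by parallel push-offs of the meridian, encircling with $z^m$ gives $\phi_i^m$ by induction on $m$ (peel off one meridian at a time), and hence encircling with any polynomial $p(z)$ gives $p(\phi_i)$ times the bare strand, by linearity of the skein module. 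Applying this with $p(z)=S_n=\ds\prod_{k=0}^{n-1}(z^2-\phi_k^2)$ produces the scalar $\ds\prod_{k=0}^{n-1}(\phi_i^2-\phi_k^2)$, which is the asserted identity; equivalently one can invoke the factorization $S_n=Q_n\ds\prod_{k=0}^{n-1}(z+\phi_k)$ noted above together with \cite[Lemma 4.1]{abkb}. The case $n=0$ is $S_0=1$, i.e. no encircling loop, and matches the empty-product convention.

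The step I expect to demand the most care is the framing and orientation bookkeeping in the encircling computation, so that the $z$-decorated meridian contributes precisely $\phi_i$ rather than $\phi_i$ times a power of $-A^{\pm 3}$; once the conventions of \cite{bhmv92, kl} are fixed this is routine, and the remainder of the argument is formal. One should also confirm that the left-hand picture is literally ``$S_n$ encircling a strand colored $i$'', with no stray strands from the marked points entering the picture, so that the eigenvalue computation applies without modification.
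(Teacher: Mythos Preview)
Your proposal is correct and matches the paper's own treatment: the paper does not spell out a proof but simply notes that the argument is the same as that of \cite[Lemma~4.1]{abkb}, which is precisely the encircling-eigenvalue computation you describe (a meridian decorated by $p(z)$ around an $i$-colored strand acts as $p(\phi_i)$), specialized here to $p(z)=S_n$ rather than $Q_n$. Your two suggested routes---either applying the eigenvalue fact directly to $S_n=\prod_{k=0}^{n-1}(z^2-\phi_k^2)$, or factoring $S_n=Q_n\prod_{k=0}^{n-1}(z+\phi_k)$ and invoking \cite[Lemma~4.1]{abkb}---are equivalent and both acceptable.
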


Notice that $\prod_{k=0}^{n-1}(\phi_i^2 - \phi_k^2)$ is zero if $n > i$.

\begin{proposition}\label{prop:pairing}

\begin{enumerate}[(i)]
\item $\langle g_{i,\vareps},x^\text{even}_n\rangle = \theta(1,i,\vareps) \ds\prod_{k=0}^{n-1}(\phi_i^2 - \phi_k^2)$.\\
\item $\langle g_{i,\vareps},y^\text{even}_n\rangle = \phi_i (\lambda^{i\text{ } 1}_\vareps)^{-2} \theta(1,i,\vareps) \ds\prod_{k=0}^{n-1}(\phi_i^2 - \phi_k^2)$.\\
\item $\langle g_{i,\vareps},x^\text{odd}_n\rangle = \phi_i \theta(1,i,\vareps) \ds\prod_{k=0}^{n-1}(\phi_i^2 - \phi_k^2)$.\\
\item $\langle g_{i,\vareps},y^\text{odd}_n\rangle = (\lambda^{i\text{ } 1}_\vareps)^{-2}  \theta(1,i,\vareps) \ds\prod_{k=0}^{n-1}(\phi_i^2 - \phi_k^2)$.\\
\end{enumerate}
\noindent Each of these is zero 
if
$n>i$.
\end{proposition}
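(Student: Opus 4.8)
The plan is to reduce each of the four pairings to a single skein computation in the disk (or in the plane), using Lemma \ref{lemma:removings} to absorb the factor $S_n$ that appears in the even and odd basis elements, and then to evaluate the remaining trivalent graph using standard recoupling identities.

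First I would unwind the definition of the relative Hopf pairing $\langle g_{i,\vareps}, \cdot\,\rangle$ for each of the four basis elements $x_n^\text{even}$, $y_n^\text{even}$, $x_n^\text{odd}$, $y_n^\text{odd}$. In each case, gluing the two solid tori produces a trivalent graph in $S^3$ in which the core $z$ of the second solid torus becomes a loop encircling the edge of $g_{i,\vareps}$ that is colored $i$ (for $x_n$) or the edge colored $\vareps$ (for $y_n$), with the other strand of the genus-1 tangle running through as the edge colored $1$. The copy of $S_n = \prod_{k=0}^{n-1}(z^2-\phi_k^2)$ sitting on that loop is precisely the configuration handled by Lemma \ref{lemma:removings}: encircling an edge colored $i$ with $S_n$ replaces the whole bundle by the scalar $\prod_{k=0}^{n-1}(\phi_i^2-\phi_k^2)$ times the bare edge colored $i$. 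This immediately produces the common factor $\prod_{k=0}^{n-1}(\phi_i^2-\phi_k^2)$ in all four formulas, and in particular shows each pairing vanishes when $n>i$ by the remark following Lemma \ref{lemma:removings}.

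After applying Lemma \ref{lemma:removings}, what remains in each case is a closed trivalent graph with edges colored $1$, $i$, $\vareps$ (and possibly an extra arc coming from the ``long'' basis elements $y_n$). For $x_n^\text{even}$ the leftover graph is a theta-graph, evaluating to $\theta(1,i,\vareps)$, giving (i). For $x_n^\text{odd}$ there is an extra copy of $z$ still threading the edge colored $i$ after removing $S_n$ (since $x_n^\text{odd}=z\cdot x_n^\text{even}$ up to the identification), and encircling an edge colored $i$ by a single $z$ multiplies by the eigenvalue $\phi_i$; this yields the factor $\phi_i$ in (iii). For the ``long'' elements $y_n^\text{even}$ and $y_n^\text{odd}$, the loop carrying $S_n$ links the edge colored $\vareps$ rather than the one colored $i$, which introduces a pair of half-twists whose framing correction is exactly $(\lambda^{i\,1}_\vareps)^{-2}$ (using the twist-coefficient notation of \cite{kl}); combining this with the theta-evaluation and, for $y_n^\text{even}$, one more $z$-encircling contributing $\phi_i$, gives (ii) and (iv). One must of course check that $S_n$ still reduces correctly via Lemma \ref{lemma:removings} in the ``long'' configuration: the point is that after sliding the $S_n$-loop off the $\vareps$-edge and onto the $i$-edge (legal since the graph is planar in a disk neighborhood), the hypothesis of Lemma \ref{lemma:removings} applies verbatim, with the price of the slide being the $\lambda$-twist coefficient.

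The main obstacle will be bookkeeping the framing/twist factors for the ``long'' basis elements $y_n^\text{even}$ and $y_n^\text{odd}$: one has to be careful about which edge the auxiliary loop encircles before and after isotopy, and to confirm that the twist correction is $(\lambda^{i\,1}_\vareps)^{-2}$ with the correct sign of the exponent, matching the conventions in \cite{kl} and the earlier computation \cite[Lemma 4.1]{abkb}. Everything else is a routine application of Lemma \ref{lemma:removings}, the eigenvalue property of encircling by $z$, and the evaluation of the theta-graph; the vanishing for $n>i$ is automatic once the common product factor is extracted.
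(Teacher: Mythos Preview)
Your overall strategy matches the paper's: apply Lemma~\ref{lemma:removings} to strip off the $S_n$ factor, pick up the extra $\phi_i$ from any remaining copy of $z$, and evaluate the residual planar graph. Parts (i) and (iii) are handled correctly.

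The geometric picture you give for the $y_n$ cases, however, is wrong, and the ``fix'' you propose is not a valid skein move. In the relative Hopf pairing the core of the second solid torus always becomes a meridian of the first, so the $S_n$ component of $y_n^{\text{even}}$ and $y_n^{\text{odd}}$ encircles the \emph{same} edge of $g_{i,\vareps}$ as it does for $x_n^{\text{even}}$ and $x_n^{\text{odd}}$, namely the edge colored $i$. Lemma~\ref{lemma:removings} therefore applies directly, with no sliding required, and yields $\prod_{k=0}^{n-1}(\phi_i^2-\phi_k^2)$ in all four cases. There is no skein identity in which sliding a circle from one edge of a trivalent vertex to another costs a factor of $\lambda^{ab}_c$; that coefficient arises only from undoing a half-twist at a vertex.

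What actually distinguishes the $y_n$ cases is the \emph{arc}, not the position of $S_n$. The long arc of $y_n$ wraps once around the core of the second torus, i.e.\ once around the meridian of the first torus, so after gluing it passes through the $i$-colored edge of $g_{i,\vareps}$ and produces two crossings between the strand colored $1$ and the strand colored $i$ adjacent to the trivalent vertex. Undoing these crossings with the standard twist formula (see \cite[equations 2.2, 2.5]{abkb}, together with $\lambda^{i\,1}_\vareps=\lambda^{1\,i}_\vareps$) contributes $(\lambda^{i\,1}_\vareps)^{-2}$, after which the remaining graph is exactly the theta graph $\theta(1,i,\vareps)$. This is precisely the computation the paper carries out. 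Once you correct the source of the $\lambda$ factor, your outline becomes the paper's proof.
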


\begin{proof}
\begin{enumerate}[(i)]
\item We have from Lemma \ref{lemma:removings} that $$ \langle g_{i,\vareps}, x^\text{even}_n\rangle = \hspace{.05in}\begin{minipage}{1in}\includegraphics[height=.75in]{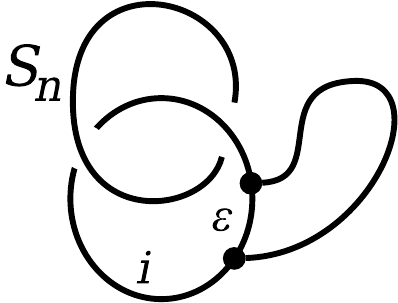}\end{minipage} = \ds\prod_{k=0}^{n-1}(\phi^2_i - \phi^2_k)\hspace{.05in} \begin{minipage}{.65in}\includegraphics[height=.5in]{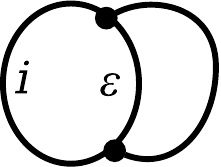}\end{minipage} =  \ds\theta(1,i,\vareps)\prod_{k=0}^{n-1}(\phi^2_i - \phi^2_k).$$\\

\item Using Lemma \ref{lemma:removings}, \cite[ equations 2.2, 2.5]{abkb} and $\lambda^{i\text{ } j}_k= \lambda^{j\text{ }i}_k,$
$$\begin{array}{r c  l}
\langle g_{i,\vareps}, y^\text{even}_n\rangle & = & \begin{minipage}{.75in}\includegraphics[height=.85in]{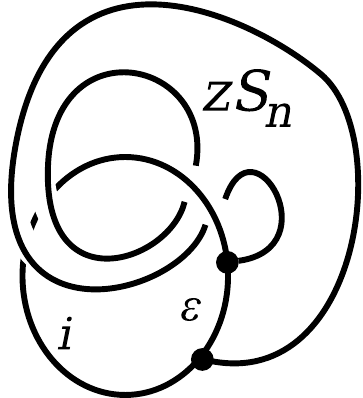}\end{minipage}  = \hspace{.05in}  \ds\phi_i\prod_{k=0}^{n-1}(\phi^2_i - \phi^2_k)\hspace{.05in} \begin{minipage}{.75in}\includegraphics[height=.85in]{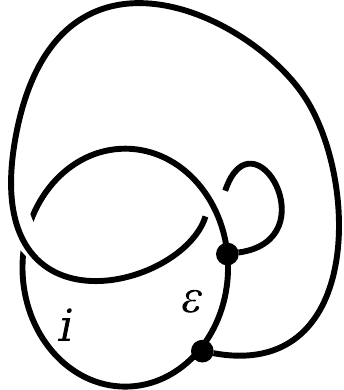}\end{minipage}\\
& & \\
& = & \ds\phi_i(\lambda^{i\text{ } 1}_\vareps)^{-1}(\lambda^{1\text{ } i}_\vareps)^{-1}\prod_{k=0}^{n-1}(\phi^2_i - \phi^2_k)\hspace{.05in} \begin{minipage}{.6in}\includegraphics[height=.5in]{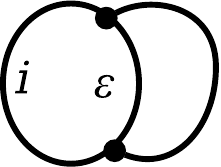}\end{minipage}\\
& = & \ds\phi_i(\lambda^{i\text{ } 1}_\vareps)^{-2}\theta(1,i,\vareps)\prod_{k=0}^{n-1}(\phi^2_i - \phi^2_k).
\end{array}$$

\item 
\begin{align*}\langle g_{i,\vareps}, x^\text{odd}_n\rangle & = \hspace{.05in}\begin{minipage}{1.1in}\includegraphics[height=.75in]{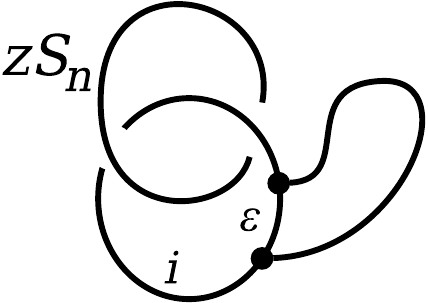}\end{minipage} = \ds\phi_i\prod_{k=0}^{n-1}(\phi^2_i - \phi^2_k)\hspace{.05in} \begin{minipage}{.65in}\includegraphics[height=.5in]{pairinggraphx2.pdf}\end{minipage}\\
& =  \ds\phi_i\theta(1,i,\vareps)\prod_{k=0}^{n-1}(\phi^2_i - \phi^2_k).\end{align*}\\

\item 
$$\begin{array}{r c  l}
\langle g_{i,\vareps}, y^\text{odd}_n\rangle & = & \begin{minipage}{.75in}\includegraphics[height=.85in]{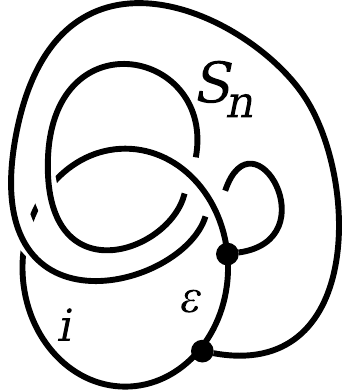}\end{minipage}  = \hspace{.05in}  \ds\prod_{k=0}^{n-1}(\phi^2_i - \phi^2_k)\hspace{.05in} \begin{minipage}{.75in}\includegraphics[height=.85in]{pairinggraphy2.pdf}\end{minipage}\\
& & \\
& = & \ds(\lambda^{i\text{ } 1}_\vareps)^{-1}(\lambda^{1\text{ } i}_\vareps)^{-1}\prod_{k=0}^{n-1}(\phi^2_i - \phi^2_k)\hspace{.05in} \begin{minipage}{.6in}\includegraphics[height=.5in]{pairinggraphy3.pdf}\end{minipage}\\
& = & \ds\ds(\lambda^{i\text{ } 1}_\vareps)^{-2} \theta(1,i,\vareps)\prod_{k=0}^{n-1}(\phi^2_i - \phi^2_k).
\end{array}$$
\end{enumerate}
\end{proof}

 Proposition \ref{prop:pairing} implies that only finitely many of $\langle \G,x^\text{even}_n\rangle/\delta$ and $\langle \G,y^\text{even}_n\rangle/\delta$ will be non-zero.
 Similarly only finitely many of $\langle \G,x^\text{odd}_n\rangle/\delta$ and $\langle \G,y^\text{odd}_n\rangle/\delta$ will be non-zero.  This is why we choose to define the even/odd bases for $K^\text{even/odd}(\storus,2)$ as we did above.          
 If, for instance, we replace $S_n$ by $z^{2n}$ in these definitions, then we would not have this finiteness.


\section{Examples}\label{section:examples}

We compute the even and odd Kauffman bracket ideals for three tangles $\A$, $\D$, and $\e$.  In each of these computations, our first step is to compute the doubling pairing of the tangle in question with the graph basis.  We leave out the full computation for the sake of brevity, but we follow the same procedure as in \cite[Appendix A]{abkb}.
We then write each tangle as a linear combination of graph basis elements. It turns out that due to admissibility conditions, $\A$, $\D$, and $\e$ may all be written as $c_{0,1}g_{0,1} + c_{2,1}g_{2,1} + c_{2,3}g_{2,3}$ for some coefficients $c_{i,\vareps} \in R$. Thus we have using Proposition \ref{prop:pairing}:

\begin{lemma}\label{i012} If $\G$= $\A$, $\D$, or $\e$, 
$I_\G^\text{even}$ is generated by $\langle \G, x^\text{even}_i\rangle/\delta$ and $\langle \G, y^\text{even}_i\rangle/\delta$ where $0\leq i \leq 2$.  Similarly, $I_\G^\text{odd}$ is generated by $\langle \G, x^\text{odd}_i\rangle/\delta$ and $\langle \G, y^\text{odd}_i\rangle/\delta$ where $0\leq i \leq 2$.
\end{lemma}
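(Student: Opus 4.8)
The plan is straightforward: we already know from the discussion at the end of \S\ref{section:method} that the collection $\{\langle \G, x^\text{even}_n\rangle/\delta,\ \langle \G, y^\text{even}_n\rangle/\delta : n\geq 0\}$ generates $I_\G^\text{even}$, and likewise that $\{\langle \G, x^\text{odd}_n\rangle/\delta,\ \langle \G, y^\text{odd}_n\rangle/\delta : n\geq 0\}$ generates $I_\G^\text{odd}$, so it suffices to check that every generator indexed by $n>2$ vanishes.

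First I would use the hypothesis $\G = c_{0,1}g_{0,1} + c_{2,1}g_{2,1} + c_{2,3}g_{2,3}$ together with bilinearity of the relative Hopf pairing to write $\langle \G, x^\text{even}_n\rangle = c_{0,1}\langle g_{0,1}, x^\text{even}_n\rangle + c_{2,1}\langle g_{2,1}, x^\text{even}_n\rangle + c_{2,3}\langle g_{2,3}, x^\text{even}_n\rangle$, and similarly with $y^\text{even}_n$, $x^\text{odd}_n$, $y^\text{odd}_n$ substituted for $x^\text{even}_n$. Then I would invoke Proposition \ref{prop:pairing}: each pairing of a graph basis element $g_{i,\vareps}$ with one of the even or odd basis elements carries the factor $\prod_{k=0}^{n-1}(\phi_i^2-\phi_k^2)$, which vanishes whenever $n>i$. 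Since the only graph basis elements occurring in $\G$ have first index $0$ or $2$, all three summands vanish once $n>2$; hence $\langle \G, x^\text{even}_n\rangle = \langle \G, y^\text{even}_n\rangle = 0$ for $n>2$, and likewise for the odd pairings. Dividing by $\delta$, the generating sets from \S\ref{section:method} therefore collapse to the finite lists indexed by $0\leq n\leq 2$, which is exactly the assertion.

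This is essentially bookkeeping, so I do not expect a genuine obstacle. The only point meriting a word of care is the coefficient ring: the $c_{i,\vareps}$ and the individual pairings $\langle g_{i,\vareps},\cdot\rangle$ a priori live in $R$ rather than in $\laurent$, whereas $I_\G^\text{even}$ is an ideal of $\laurent$. This is harmless, because $\langle \G, x^\text{even}_n\rangle$ is by definition computed directly from the genuine skein elements $\G\in K(\storus,2)$ and $x^\text{even}_n\in K^\text{even}(\storus,2)$, so it lies in $\laurent$ to begin with; the expansion of $\G$ over $R$ is merely a convenient device for evaluating it, and the resulting vanishing statements for $n>2$ are statements in $\laurent$.
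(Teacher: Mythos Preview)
Your argument is correct and is essentially the paper's own: the paper simply notes that $\G$ expands as $c_{0,1}g_{0,1}+c_{2,1}g_{2,1}+c_{2,3}g_{2,3}$ and invokes Proposition~\ref{prop:pairing} (whose vanishing clause for $n>i$ kills all generators with $n>2$), which is exactly what you spell out. Your extra remark about the $R$ versus $\laurent$ issue is a reasonable clarification but not something the paper dwells on.
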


For  $\G$= $\A$, $\D$ or $\e$, we followed the same procedure as in \cite{abkb,thesis},  to find  $I_\A^\text{even}$ and $I_\A^\text{odd}$, using  Proposition \ref{prop:pairing}, Lemma \ref{i012}, and Mathematica. One can verify directly that the claimed ideals are indeed non-trivial using the computations in \S\ref{section:determinant}.

\subsection{Krebes's tangle $\A$ }\label{subsection:krebes}

We consider the genus-1 tangle given by Krebes \cite{kr} pictured in Figure \ref{fig:krebes}.
We have that
$$ \langle \A, g_{i,\vareps}\rangle_D  =  \begin{minipage}{.11in}\includegraphics[width=1.1in]{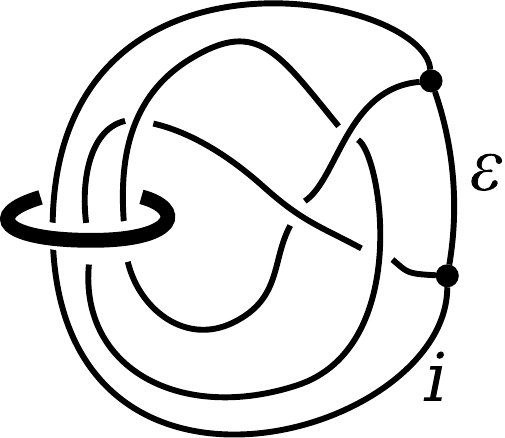}\end{minipage} \hskip1in  \quad \text{is the sum of } $$

\noindent $$ 
 \frac{  \lambda^{1 \text{ }1}_i (\lambda^{1\text{ }1}_j)^{-1} (\lambda^{1\text{ }1}_k)^{-1} (\lambda^{1\text{ }1}_l)^{-1} \Delta_j  \Delta_k  \Delta_l
 \Tet \begin{bmatrix} 1 & i & \vareps\\ 1 & j & 1  \end{bmatrix}
  \Tet  \begin{bmatrix} l & 1 & j \\ 1 & k & 1  \end{bmatrix}
  \Tet \begin{bmatrix} 1 & \vareps & 1 \\ k & l & j  \end{bmatrix}}
 {\theta(1, 1, i) \theta(1,1,j) \theta(1,1,k) \theta(1,1,l) \theta(1,j,\vareps) \theta(\vareps, k, 1)  \theta(l, k, j)}
$$

\noindent 
over all  $j$, $k$, and $l$ such that the following triples are admissible:  $(1,1,i)$, $(1,1,j)$, $(1,j,\vareps)$, $(1,1,k)$, $(\varepsilon,k,1)$, $(1,1,l)$, and $(l,k,j)$.  Admissibility conditions imply that 0 and 2 are the only possible admissible values for $j$, $k$, and $l$. Note that, if $i \ne 0, 2$, there are no such $j$,$k$, $l$, and the given sum is over an empty index set.
Thus, the value of the sum is zero.  
So $\langle \A, g_{i,\vareps}\rangle_D =0$ unless $i=0$ or $i=2$.
 
The coefficients for $\A$ as a linear combination  of the graph basis are
\begin{equation*} c_{0,1} = \frac{-1-A^8+A^{12}}{1+A^4}, \quad c_{2,1} = \frac{-1+A^4+A^{12}}{A^6+A^{10}+A^{14}}, \text{ and } c_{2,3} = 1.\end{equation*}

After further computation, we obtain the generating sets given in the following result.

\begin{proposition}
The even Kauffman bracket ideal $I_\A^\text{even}$ of Krebes's tangle $\A$ is trivial.  The odd Kauffman bracket ideal of $\A$ is $I_\A^\text{odd} = \langle 9, 4+A^4\rangle$ which is non-trivial.
\end{proposition}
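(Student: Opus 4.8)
The plan is to apply the machinery assembled in \S\ref{section:method}. By Lemma \ref{i012}, both $I_\A^\text{even}$ and $I_\A^\text{odd}$ are finitely generated: $I_\A^\text{even}$ by the eight elements $\langle \A, x^\text{even}_i\rangle/\delta$ and $\langle \A, y^\text{even}_i\rangle/\delta$ for $0\le i\le 2$, and similarly $I_\A^\text{odd}$ by the corresponding odd pairings. So the first step is to expand each of these pairings using bilinearity of the relative Hopf pairing together with the expression $\A = c_{0,1}g_{0,1} + c_{2,1}g_{2,1} + c_{2,3}g_{2,3}$ with the stated coefficients $c_{0,1} = \frac{-1-A^8+A^{12}}{1+A^4}$, $c_{2,1} = \frac{-1+A^4+A^{12}}{A^6+A^{10}+A^{14}}$, $c_{2,3} = 1$. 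For instance, $\langle \A, x^\text{even}_n\rangle = c_{0,1}\langle g_{0,1}, x^\text{even}_n\rangle + c_{2,1}\langle g_{2,1}, x^\text{even}_n\rangle + c_{2,3}\langle g_{2,3}, x^\text{even}_n\rangle$, and each term on the right is given in closed form by Proposition \ref{prop:pairing}.

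Next I would substitute the explicit values of the quantities $\theta(1,i,\vareps)$, $\lambda^{i\,1}_\vareps$, and $\phi_k = -A^{2k+2}-A^{-2k-2}$ (from \cite{kl}) for the finitely many relevant indices $i\in\{0,2\}$, $\vareps\in\{1,3\}$, $k\in\{0,1\}$. Recall $\prod_{k=0}^{n-1}(\phi_i^2-\phi_k^2)$ vanishes when $n>i$, so for $i=0$ only $n=0$ contributes, and for $i=2$ only $n\in\{0,1,2\}$ contribute; this is exactly why eight generators (rather than infinitely many) suffice. Each of the eight even generators and eight odd generators then becomes an explicit element of $R=\laurent[\{(A^k-1)^{-1}\}]$; one then checks (by clearing denominators, which are units in $R$ but whose clearing must be tracked carefully to land back in $\laurent$) that each generator actually lies in $\laurent$, and computes the ideal they generate in $\laurent$. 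For the even case the claim is that these eight elements generate the unit ideal, i.e.\ some $\laurent$-linear combination equals $1$; for the odd case the claim is that the ideal they generate equals $\langle 9, 4+A^4\rangle$.

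The main obstacle is the symbolic algebra: simplifying the eight rational-function generators in each parity, verifying membership in $\laurent$, and then computing the resulting ideal — in particular producing an explicit certificate $1 = \sum f_j h_j$ in the even case, and checking the two-sided containment $\langle \text{generators}\rangle = \langle 9, 4+A^4\rangle$ in the odd case. This is precisely the step the authors delegate to Mathematica. I would organize the ideal computation by working modulo small primes or specializing $A$ to detect non-containment, then producing exact combinations; e.g.\ to see $9 \in I_\A^\text{odd}$ and $4+A^4 \in I_\A^\text{odd}$ one exhibits them as $\laurent$-combinations of the sixteen (really, six distinct nonzero) odd pairings, and conversely writes each odd pairing in $\langle 9, 4+A^4\rangle$. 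Finally, to confirm $\langle 9, 4+A^4\rangle \ne \laurent$ (hence the odd ideal is genuinely non-trivial) and that the even ideal really is all of $\laurent$, I would invoke the independent determinant computations of \S\ref{section:determinant}: a non-trivial obstruction there forces the ideal to be proper, while the triviality of $I_\A^\text{even}$ is consistent with the unknot being an even closure of $\A$.
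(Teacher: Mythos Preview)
Your proposal is correct and follows essentially the same route as the paper: expand $\A$ in the graph basis, apply Proposition \ref{prop:pairing} via Lemma \ref{i012} to obtain a finite generating set for each ideal, simplify with computer algebra, and appeal to \S\ref{section:determinant} for non-triviality. One small slip: for $0\le i\le 2$ there are six generators of each parity (three $x$'s and three $y$'s), not eight or sixteen.
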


\subsection{A small tangle, $\D$}\label{subsection:smallex}

We now consider the genus-1 tangle $\D$ pictured in Figure \ref{fig:smallex}.   In contrast to Krebes's example, $\D$ has a non-trivial even Kauffman bracket ideal and a trivial odd Kauffman bracket ideal.
We remark that we could also obtain a tangle with these properties from Krebes's tangle by sliding one endpoint of Krebes's tangle across the longitude $\ell$ and dragging the rest of tangle along behind this endpoint.

\begin{figure}[h]
\includegraphics[height=1.2in]{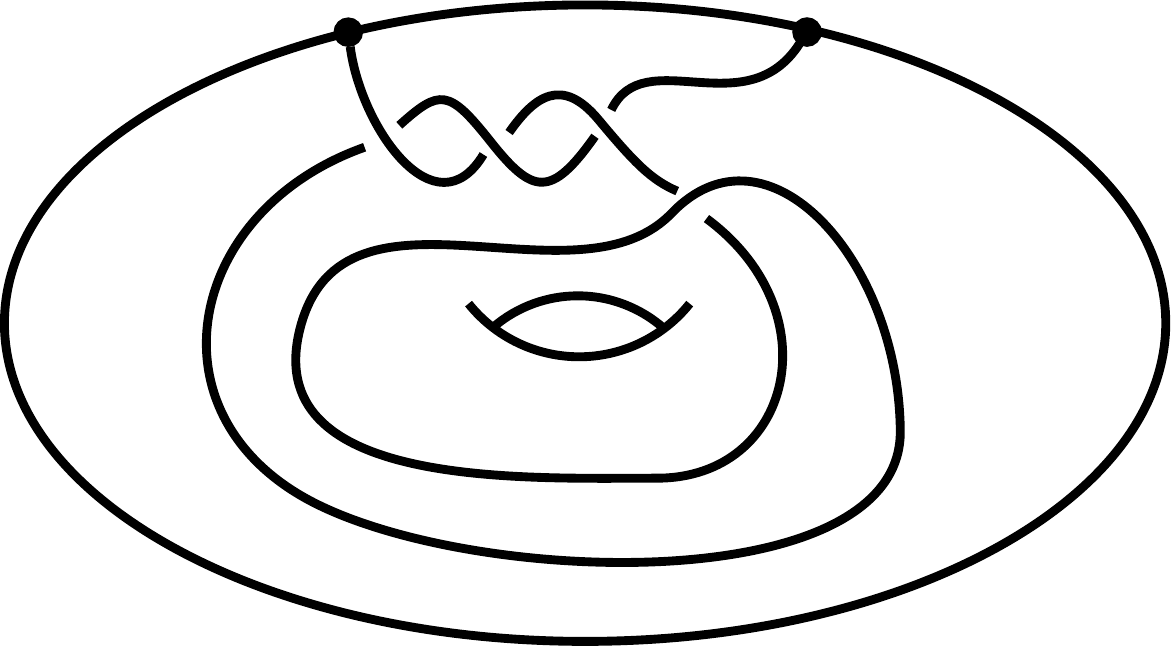}
\caption{A genus-1 tangle, denoted by $\D$.}\label{fig:smallex}
\end{figure}

We have that 
$$ \langle \D, g_{i,\vareps}\rangle_D  =  \begin{minipage}{1.1in}\includegraphics[width=1.1in]{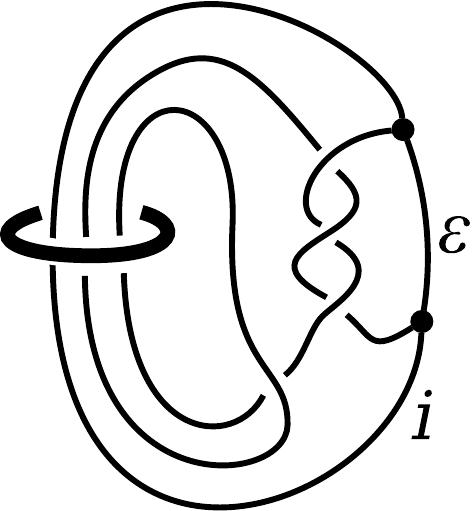}\end{minipage}  \quad \text{is the sum of } $$
$$\frac{  \lambda^{1 \text{ }1}_i (\lambda^{1\text{ }1}_j)^{-3}\Delta_j
\Tet  \begin{bmatrix} 1 & 1 & j \\ 1 & \vareps & i   \end{bmatrix}
\Tet  \begin{bmatrix} 1 & i & \vareps \\ 1 & j & 1   \end{bmatrix}}
{\theta(1, 1, i)\theta(1,1,j)\theta(1,\vareps,j)}
$$

\noindent 
over all integers $j$ such that the following are admissible triples: $(1,1,i)$, $(1,1,j)$, and $(1, \varepsilon,j)$.  
Admissibility conditions imply that 0 and 2 are the only possible admissible values for $j$, and $\langle \D, g_{i,\vareps}\rangle_D =0$ unless $i=0$ or $i=2$.

The coefficients for $\D$ as a linear combination  of the graph basis are
\begin{equation*}
 c_{0,1} = \frac{1-A^4-A^{12}}{A^2+A^6}, \quad c_{2,1} = \frac{1+A^8-A^{12}}{A^8+A^{12}+A^{16}} , 
 \text{ and } c_{2,3} = A^2 .\end{equation*}

We obtain the following generating sets after further computation.

\begin{proposition}
The even Kauffman bracket ideal of $\D$ is $I_\D^\text{even} = \langle 9,-2+A^4\rangle$ which is non-trivial. The odd Kauffman bracket ideal $I_\D^\text{odd}$ of $\D$ is trivial. 
\end{proposition}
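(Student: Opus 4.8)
The plan is to carry out, for the tangle $\D$, exactly the procedure set up in \S\ref{section:method}. By bilinearity of the relative Hopf pairing together with the expansion $\D = c_{0,1}g_{0,1} + c_{2,1}g_{2,1} + c_{2,3}g_{2,3}$ recorded above (with the listed values of $c_{0,1},c_{2,1},c_{2,3}\in R$), each generator $\langle \D, x^\text{even}_n\rangle$, $\langle \D, y^\text{even}_n\rangle$, $\langle \D, x^\text{odd}_n\rangle$, $\langle \D, y^\text{odd}_n\rangle$ is the corresponding $c_{i,\vareps}$-weighted sum of the graph-basis pairings computed in Proposition \ref{prop:pairing}. Since $\langle g_{i,\vareps}, \cdot\rangle$ vanishes for $n>i$ and only $(i,\vareps)\in\{(0,1),(2,1),(2,3)\}$ occurs, Lemma \ref{i012} reduces us to $n\in\{0,1,2\}$, i.e. at most six even generators and six odd generators.

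First I would substitute the closed forms for $\theta(1,i,\vareps)$, $\Delta_j$, $\phi_k=-A^{2k+2}-A^{-2k-2}$, and $\lambda^{i\,1}_\vareps$ from the Kauffman--Lins calculus \cite{kl} into the four formulas of Proposition \ref{prop:pairing}, multiply by $c_{0,1},c_{2,1},c_{2,3}$ (clearing the denominators $A^2+A^6$, $A^8+A^{12}+A^{16}$, etc.), divide by $\delta=-A^2-A^{-2}$, and simplify each expression to a Laurent polynomial in $A$; this is the part carried out with Mathematica. I expect that, after discarding zero entries and unit ($\pm A^k$) multiples and reducing the remaining generators against one another over $\laurent$, the even list collapses to the two elements $-9$ and $-2+A^4$, while the odd list contains an explicit $\laurent$-combination equal to $1$.

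To finish the even case I would verify the two containments separately: that each computed even generator is an explicit $\laurent$-combination of $-9$ and $-2+A^4$, and conversely that $-9$ and $-2+A^4$ are $\laurent$-combinations of the computed generators (both certificates being produced by the reduction). Non-triviality of $\langle -9, -2+A^4\rangle$ then follows by reducing coefficients modulo $3$: the ideal maps onto $\langle 1+A^4\rangle$ in $\mathbb{F}_3[A,A^{-1}]$, and $1+A^4$ is not a unit there (the units being $\pm A^k$), so the ideal is proper; alternatively one may invoke the determinant computation of \S\ref{section:determinant}. For the odd case I would simply exhibit the identity $\sum(\text{polynomial})\cdot(\text{odd generator})=1$ in $\laurent$, which gives $I_\D^\text{odd}=\laurent$.

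The main obstacle is computational rather than conceptual: the simplifications and, more importantly, the ideal-membership reductions take place over $\laurent$, which is not a principal ideal domain, so one cannot just compare gcd's — every containment must be certified by an explicit combination. Some care is also needed in bookkeeping the admissibility constraints and the normalizations of $\theta$ and $\lambda$, so that the denominators genuinely cancel and each stated generator really lies in $K(\storus,2)$ (equivalently, has coefficients in $\laurent$ and not merely in the localization $R$), as asserted in \S\ref{section:graphbasis}.
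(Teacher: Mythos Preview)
Your proposal follows exactly the paper's approach: expand $\D$ in the graph basis, apply Proposition~\ref{prop:pairing} and Lemma~\ref{i012} to reduce to finitely many Hopf pairings, simplify with Mathematica, and verify non-triviality via the determinant/mod-$3$ argument of \S\ref{section:determinant}. The only addition in the paper is that, for the odd case, it also exhibits an explicit odd closure of $\D$ isotopic to the unknot (Figure~\ref{fig:smalltangletrivialclosure}), which gives triviality of $I_\D^{\mathrm{odd}}$ without any ideal-membership certificate; you might mention this as a cleaner alternative to your proposed algebraic combination summing to $1$.
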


Indeed, one can see that the odd closure shown in Figure \ref{fig:smalltangletrivialclosure} is the unknot, so $I_\D^\text{odd}$ must be trivial.

\begin{figure}[h]
\includegraphics[height=1.5in]{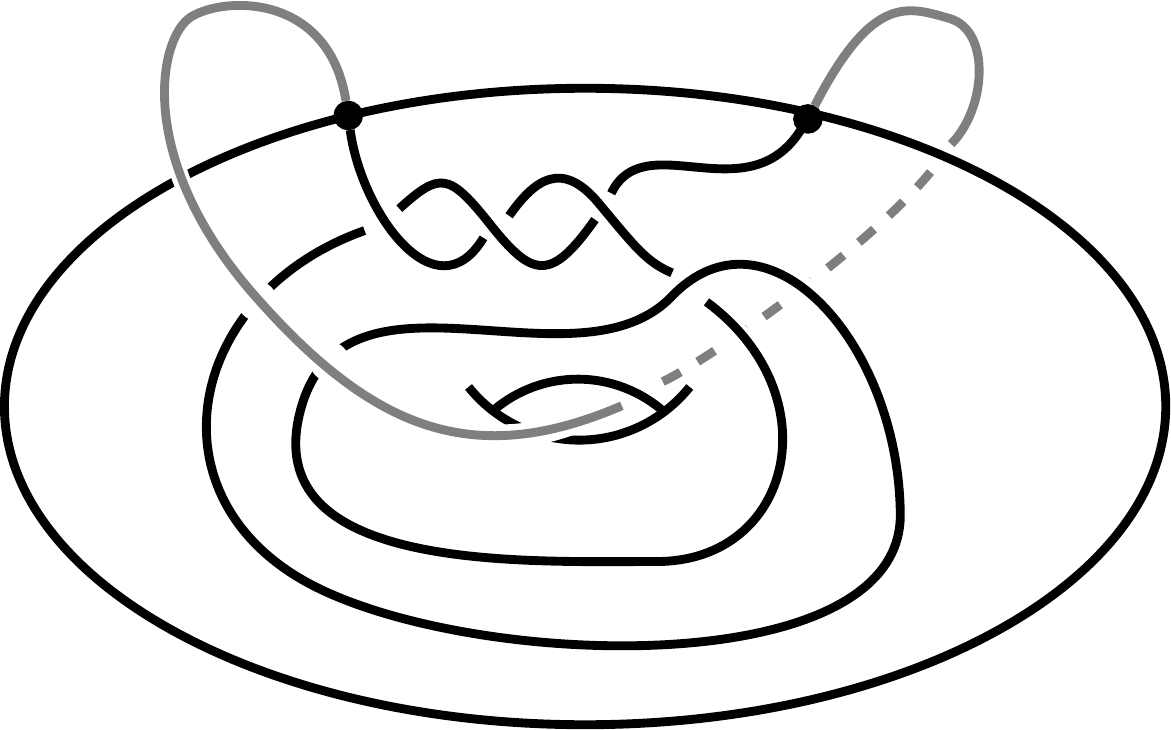}
\caption{A trivial odd closure of the tangle $\D$.}\label{fig:smalltangletrivialclosure}
\end{figure}

\subsection{A particularly interesting tangle, $\e$} \label{subsection:85ex}

We consider the genus-1 tangle $\e$ pictured in Figure \ref{fig:85ex}.  
\begin{figure}[h]
\includegraphics[height=1.4in]{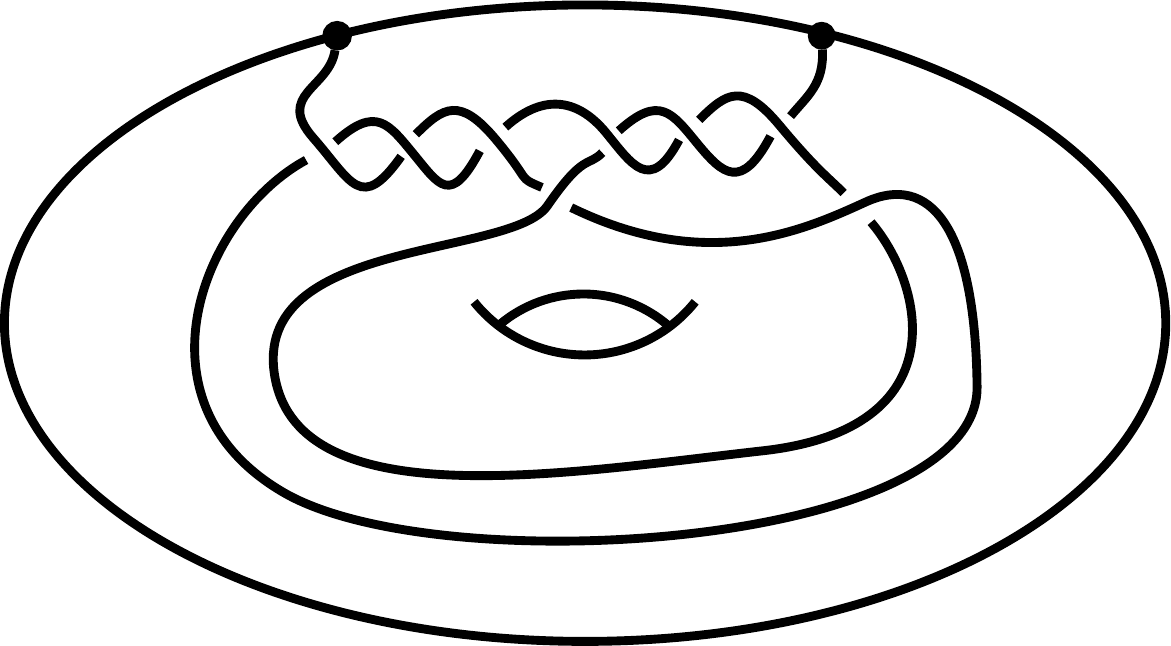}
\caption{A genus-1 tangle, denoted by $\e$.}\label{fig:85ex}
\end{figure}

We have that
$$ \langle \e, g_{i,\vareps}\rangle_D   =  \begin{minipage}{1.15in}\includegraphics[width=1.1in]{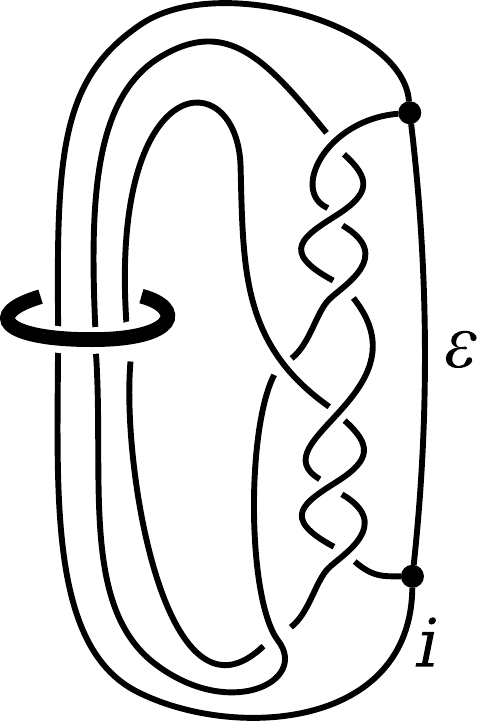}\end{minipage}  \quad \text{is the sum of } $$
\noindent $
 \frac{  \lambda^{1 \text{ }1}_i (\lambda^{1\text{ }1}_j)^{-3} (\lambda^{1\text{ }1}_k)^{-3} 
 \lambda^{1\text{ }1}_l  
 \Delta_j  \Delta_k  \Delta_l
 \Tet \begin{bmatrix} 1 & i & \vareps\\ 1 & j & 1  \end{bmatrix}
  \Tet  \begin{bmatrix} \vareps & i & 1 \\ 1 & k & 1  \end{bmatrix}
  \Tet \begin{bmatrix} 1 & 1 & l \\ 1 & \vareps & j  \end{bmatrix}
    \Tet \begin{bmatrix} 1 & k & \vareps \\ 1 & l & 1  \end{bmatrix}}
 {\theta(1, 1, i) \theta(1,1,j) \theta(1,1,k) \theta(1,1,l) \theta(1,j,\vareps) \theta(1, k, \vareps)  \theta(1,l,\vareps)}
 $
 
\vspace{10pt}

\noindent 
over all $j$, $k$, and $l$ such that the following triples are admissible: $(1,1,i)$, $(1,1,j)$, $(1,j,\vareps)$, $(1,1,k)$, $(1,k, \varepsilon)$,   $(1,1,l)$, and
$(1,l,\varepsilon)$. Admissibility conditions imply that 0 and 2 are the only possible admissible values for $j$, $k$, and $l$.  So, $\langle \e, g_{i,\vareps}\rangle_D =0$ unless $i=0$ or $i=2$.

The coefficients for $\e$ as a linear combination of the graph basis are 
\begin{align*}c_{0,1} =& \frac{-1+2A^4-3A^8+2A^{12}-3A^{16}+2A^{20}-A^{24}+A^{28}}{A^{12}+A^{16}} \\c_{2,1} = & \frac{-1+A^4-2A^8+3A^{12}-2A^{16}+3A^{20}-2A^{24}+A^{28}}{A^{18}+A^{22}+A^{26}} \text{ and  } c_{2,3} = A^4.\end{align*} 

The following generating sets are obtained after further computation.

\begin{proposition}
The even Kauffman bracket ideal of $\e$ is $I_\e^\text{even} = \langle 5,1+A^4\rangle$ which is non-trivial. The odd Kauffman bracket ideal of $\e$ is  $I_\e^\text{odd} = \langle 9,4+A^4\rangle$ which is also non-trivial.
\end{proposition}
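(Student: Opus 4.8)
The plan is to run the machinery of Section~\ref{section:method} on the tangle $\e$, exactly as was done for $\A$ and $\D$ in the preceding subsections. The starting point is the expression of $\e$ as a linear combination of graph basis elements, $\e = c_{0,1}g_{0,1}+c_{2,1}g_{2,1}+c_{2,3}g_{2,3}$, with the three coefficients recorded just above the statement; by Lemma~\ref{i012} it suffices to compute the eight relative Hopf pairings $\langle \e, x^\text{even}_i\rangle/\delta$, $\langle \e, y^\text{even}_i\rangle/\delta$, $\langle \e, x^\text{odd}_i\rangle/\delta$, $\langle \e, y^\text{odd}_i\rangle/\delta$ for $0\le i\le 2$, and then identify the ideals they generate in $\laurent$.

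First I would substitute the coefficients $c_{i,\vareps}$ into the formulas of Proposition~\ref{prop:pairing}. Concretely, $\langle \e, x^\text{even}_n\rangle = \sum_{(i,\vareps)} c_{i,\vareps}\,\theta(1,i,\vareps)\prod_{k=0}^{n-1}(\phi_i^2-\phi_k^2)$, and similarly for the other three families, each pairing being a sum of only the three admissible terms $(0,1),(2,1),(2,3)$; since $\prod_{k=0}^{n-1}(\phi_i^2-\phi_k^2)=0$ for $n>i$, the $n=2$ pairings already capture everything. Evaluating $\theta(1,i,\vareps)$, $\lambda^{i\,1}_\vareps$, $\Delta_i$, and $\phi_i$ at the relevant small colors (these are the standard closed-form values from \cite{kl}) turns each pairing into an explicit Laurent polynomial in $A$ after clearing the $\delta$; this is the bookkeeping that was delegated to Mathematica in the paper, and I would do the same. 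Collecting the resulting polynomials gives a finite generating set for $I_\e^\text{even}$ and another for $I_\e^\text{odd}$.

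The remaining step is to simplify these generating sets down to $\langle 5,1+A^4\rangle$ and $\langle 9,4+A^4\rangle$ respectively. Here one uses Gröbner-basis-style reductions over $\laurent$: repeatedly take $\laurent$-combinations of the generators (multiplying by units $\pm A^m$ and subtracting) to lower degrees and extract the gcd-type relations, checking that each simplified generator still lies in the original ideal and conversely that the original generators lie in $\langle 5,1+A^4\rangle$ (resp.\ $\langle 9,4+A^4\rangle$). Finally, to see that neither ideal is trivial, I would invoke the determinant computations of \S\ref{section:determinant} as the paper suggests: reducing modulo the maximal ideal $\langle A^2+1,\, \text{a prime}\rangle$ or specializing $A$ shows $5$ and $1+A^4$ have no common unit combination equal to $1$ (e.g.\ setting $A^2=1$ sends $1+A^4\mapsto 2$ and $5\mapsto 5$, and $\gcd(2,5)$-type obstructions persist after checking $A^2=-1$ as well), and similarly $\gcd$-considerations on $9$ and $4+A^4$ rule out triviality of $I_\e^\text{odd}$.

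The main obstacle I anticipate is not conceptual but the degree-reduction/ideal-membership certification in the last paragraph: showing that the raw list of eight Laurent polynomials generates \emph{exactly} the claimed two-generator ideal, in both directions, requires a careful finite computation in $\laurent$ (which, unlike a field, forces one to track units $\pm A^m$ and to argue that no further cancellation is possible). Verifying non-triviality is comparatively easy once the generators are in the stated form, since it reduces to checking that the two generators have a nontrivial common factor after a suitable ring quotient, and the cross-check against the determinant results of \S\ref{section:determinant} provides an independent confirmation.
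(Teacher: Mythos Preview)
Your approach is exactly the paper's: the paper itself offers no displayed proof beyond ``obtained after further computation,'' meaning the procedure of \S\ref{section:method} via Proposition~\ref{prop:pairing} and Lemma~\ref{i012}, with Mathematica doing the polynomial bookkeeping and ideal reduction.

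One concrete slip to fix in your non-triviality check: the specializations $A^2=\pm 1$ both give $A^4=1$, so $1+A^4\mapsto 2$ and $\langle 5,2\rangle=\langle 1\rangle$---this proves nothing. The specialization that works is precisely the $\Omega$ of \S\ref{section:determinant}, sending $A\mapsto e^{\pi i/4}$ so that $A^4\mapsto -1$; then $1+A^4\mapsto 0$ and $\langle 5,1+A^4\rangle$ maps to the proper ideal $\langle 5\rangle\subset\mathbb{Z}[\omega]$, and likewise $4+A^4\mapsto 3$ so $\langle 9,4+A^4\rangle\mapsto\langle 3\rangle$. (Also a minor miscount: there are six even and six odd pairings for $0\le n\le 2$, twelve in all, not eight.)
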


These corollaries follow immediately.

\begin{corollary}
The Kauffman bracket ideal $I_\e$ of the genus-1 tangle $\e$ is trivial.
\end{corollary}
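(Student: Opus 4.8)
The plan is to observe that, for any genus-1 tangle $\G$, the ordinary Kauffman bracket ideal is the sum of its even and odd parts, and then to read off the conclusion from the preceding proposition by an elementary computation in $\laurent$.

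First I would recall from \S\ref{section:evenodd} that $K(\storus,2) = K^\text{even}(\storus,2)\oplus K^\text{odd}(\storus,2)$, and that the union of the even basis $\{x_n^\text{even},y_n^\text{even}\}_{n\ge 0}$ with the odd basis $\{x_n^\text{odd},y_n^\text{odd}\}_{n\ge 0}$ is a basis of $K(\storus,2)$. Since the Kauffman bracket polynomial of an arbitrary closure $L$ of $\e$ can be written $\langle L\rangle = \langle\e,\G'\rangle$ for some $\G'\in K(\storus,2)$, and every closure of a genus-1 tangle is either even or odd, expanding $\G'$ in this basis shows that $I_\e$ is generated by the reduced pairings $\langle\e,x_n^\text{even}\rangle/\delta$, $\langle\e,y_n^\text{even}\rangle/\delta$, $\langle\e,x_n^\text{odd}\rangle/\delta$ and $\langle\e,y_n^\text{odd}\rangle/\delta$ for $n\ge 0$; equivalently, $I_\e = \ev{\e} + \od{\e}$.

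Next I would invoke the preceding proposition, which gives $\ev{\e} = \langle 5,\,1+A^4\rangle$ and $\od{\e} = \langle 9,\,4+A^4\rangle$, to conclude that $I_\e = \langle 5,\,1+A^4,\,9,\,4+A^4\rangle$. Finally, since $9-5 = 4 \in I_\e$ and therefore $5 - 4 = 1 \in I_\e$ (equivalently, $\gcd(5,9) = 1$ in $\Z\subseteq\laurent$, so already $\langle 5,9\rangle = \laurent$), we obtain $I_\e = \laurent$; that is, $I_\e$ is trivial.

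There is essentially no hard step here: the substance is entirely contained in the proposition immediately preceding the corollary. The only point deserving a sentence of justification is the identity $I_\G = \ev{\G}+\od{\G}$ for genus-1 tangles $\G$, which follows from the direct sum decomposition of $K(\storus,2)$ together with the fact that every closure of a genus-1 tangle is even or odd with respect to the chosen longitude $l$.
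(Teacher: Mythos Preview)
Your proof is correct and is exactly the argument the paper has in mind: the corollary is stated as following immediately from the preceding proposition, and the implicit reasoning is precisely that $I_\e = \ev{\e}+\od{\e}$ together with $\gcd(5,9)=1$. You have simply written out the details the paper omits.
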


\begin{corollary}
The genus-1 tangle $\e$ does not embed in the unknot.
\end{corollary}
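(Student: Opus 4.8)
The plan is to deduce this from the preceding corollary and Proposition~1.1 (the first proposition in the introduction). First I would recall that the unknot $U$, as a link in $S^3$, has reduced Kauffman bracket polynomial $\langle U\rangle' = \langle U\rangle/\delta = 1$. If $U$ were a closure of the genus-1 tangle $\e$, then by definition of even and odd closures with respect to the longitude $l$, it would be either an even closure or an odd closure of $\e$. By Proposition~1.1, if $U$ is an even closure of $\e$ then $1 = \langle U\rangle' \in I_\e^{\text{even}}$, forcing $I_\e^{\text{even}} = \laurent$; similarly if $U$ is an odd closure then $I_\e^{\text{odd}} = \laurent$.

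The key step is then to invoke the immediately preceding Proposition, which computes $I_\e^{\text{even}} = \langle 5, 1+A^4\rangle$ and $I_\e^{\text{odd}} = \langle 9, 4+A^4\rangle$, together with the assertion there that both ideals are non-trivial, i.e.\ neither equals $\laurent$. (If one wants to be self-contained, non-triviality of $\langle 5, 1+A^4\rangle$ follows by reducing modulo $5$: in $\mathbb{F}_5[A,A^{-1}]$ the image of the ideal is $\langle 1+A^4\rangle$, which is a proper ideal since $1+A^4$ is not a unit there, it is not a monomial; the same argument with $9$ replaced by a prime dividing it, say $3$, handles $I_\e^{\text{odd}}$, where modulo $3$ one gets $\langle 1+A^4\rangle = \langle 4+A^4\rangle$, again proper.) Since both $I_\e^{\text{even}}$ and $I_\e^{\text{odd}}$ are proper, $1$ lies in neither, so $U$ is neither an even nor an odd closure of $\e$, hence not a closure of $\e$ at all.

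I expect no real obstacle here: the corollary is a formal consequence of the computed ideals plus the observation that every closure of a genus-1 tangle is either even or odd with respect to $l$ (this dichotomy is exactly the content of the first author's definition recalled in the introduction, and of the direct sum decomposition $K(\storus,2) = K^{\text{even}}(\storus,2)\oplus K^{\text{odd}}(\storus,2)$ in \S\ref{section:evenodd}). The only thing to be slightly careful about is that embedding in the unknot means producing the unknot as a \emph{closure}, and that the complementary 1-manifold $\e'$ then lies in $K^{\text{even}}(\storus,2)$ or $K^{\text{odd}}(\storus,2)$ according to its intersection parity with $\frak D$; in either case $\langle U\rangle = \langle \e, \e'\rangle$ lands in the corresponding ideal after dividing by $\delta$. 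Thus the proof reduces to two lines once the preceding Proposition is in hand.
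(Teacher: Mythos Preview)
Your argument is correct and is exactly the approach the paper takes: both corollaries are stated to ``follow immediately'' from the computation $I_\e^{\text{even}}=\langle 5,1+A^4\rangle$ and $I_\e^{\text{odd}}=\langle 9,4+A^4\rangle$, with the paper remarking that the even and odd ideals ``working together'' obstruct embedding in the unknot. One small slip: your opening sentence says you will deduce this from ``the preceding corollary,'' but that corollary asserts the \emph{ordinary} ideal $I_\e$ is trivial, which is irrelevant here; what you actually (and correctly) invoke is the preceding \emph{Proposition} computing the even and odd ideals.
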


Although $\e$ is not obstructed from embedding in the unknot by the ordinary Kauffman bracket ideal, the even and odd Kauffman bracket ideals, working together, do provide an obstruction.


\section{Relation to Determinants}\label{section:determinant}

Let $\omega$  denote $e^{\frac{\pi i}4}$, and $\Omega:\laurent \rightarrow \mathbb{Z}[\omega]$ be the ring epimorphism sending $A$ to $\omega$.
According to  \cite[Prop. 1 on p. 329; \S 11]{kr}, 
\begin{equation}\det(L)=  \omega^j \Omega({\langle L \rangle'}) \label{kre}\end{equation} for an integer $j$, chosen so that $\omega^j \Omega({\langle L \rangle'})$ is a non-negative integer. In fact, \ref{kre} follows easily from  \cite[Corollary 3]{J}
and \cite[Thm 2.8]{K} without consideration of the ``monocyclic states'' used in \cite{kr}. 

\begin{proposition}\label{omeg}
If $L$ is a closure of  tangle $\G$, then $\det(L) \in \Omega(I_\G) \cap   \mathbb{Z}$.
If $L$ is an even closure, then $\det(L) \in \Omega(\ev{\G}) \cap   \mathbb{Z}.$
If $L$ is an odd  closure, then $\det(L) \in \Omega(\od{\G}) \cap   \mathbb{Z}.$
\end{proposition}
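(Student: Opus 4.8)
The plan is to unwind the definitions and let everything follow from the Krebes identity $\det(L) = \omega^j \Omega(\langle L\rangle')$ together with the defining property of the (even/odd) Kauffman bracket ideals. First I would treat the general case. By definition, $\langle L\rangle'$ is one of the generators of $I_\G$ whenever $L$ is a closure of $\G$ that provides an embedding obstruction; but in fact, for \emph{any} closure $L$ of $\G$, Proposition (the first proposition of the paper) tells us the reduced Kauffman bracket polynomial $\langle L\rangle'$ lies in $I_\G$ — and this is really what is needed here, since $I_\G$ is closed under the ring operations of $\laurent$. Applying the ring homomorphism $\Omega$, we get $\Omega(\langle L\rangle') \in \Omega(I_\G)$. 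Then, by Krebes' formula, $\det(L) = \omega^j \Omega(\langle L\rangle')$, and since $\omega = \Omega(A)$ is a unit in $\mathbb{Z}[\omega]$ with $\omega^j \in \Omega(\laurent)$, multiplication by $\omega^j$ preserves membership in the $\Omega(\laurent)$-ideal $\Omega(I_\G)$. Hence $\det(L) \in \Omega(I_\G)$. Finally, $\det(L)$ is by definition a non-negative integer, so $\det(L) \in \Omega(I_\G) \cap \mathbb{Z}$.

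The even and odd cases are identical word-for-word, replacing $I_\G$ by $\ev{\G}$ or $\od{\G}$ and ``closure'' by ``even closure'' or ``odd closure.'' The only input needed is: if $L$ is an even closure of $\G$, then $\langle L\rangle' \in \ev{\G}$, which is exactly the statement of the first proposition of the paper (the ``more generally'' clause); and similarly for odd closures. Everything else — applying $\Omega$, multiplying by the unit $\omega^j$, and intersecting with $\mathbb{Z}$ — carries over verbatim.

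I do not expect any genuine obstacle here; the proposition is essentially a formal consequence of results already established. The one point to state carefully is that $I_\G$ (and $\ev{\G}$, $\od{\G}$) is literally generated by the \emph{reduced} brackets $\langle L\rangle'$, so that Krebes' identity can be invoked directly on a generator, and that $\omega^j$ lies in the image $\Omega(\laurent) = \mathbb{Z}[\omega]$ so the ideal $\Omega(I_\G)$ is stable under multiplication by it — a triviality since $\omega$ is a unit in $\mathbb{Z}[\omega]$. If one wants to be scrupulous about the ``embedding obstruction'' wording in the original definition of $I_\G$ versus the ideal generated by \emph{all} reduced brackets of closures, I would simply cite the first proposition of the paper, whose ``more generally'' clause makes precisely the membership statement $\langle L\rangle' \in I_\G$ (resp.\ $\ev{\G}$, $\od{\G}$) that the argument uses.
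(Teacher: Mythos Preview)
Your argument is correct. The paper does not include a proof of this proposition; it is stated immediately after the Krebes identity $\det(L)=\omega^j\Omega(\langle L\rangle')$ and left to the reader as a formal consequence of the definitions, which is exactly what your proposal supplies. One small clarification: for the general ideal $I_\G$, the membership $\langle L\rangle'\in I_\G$ is simply the definition of $I_\G$ (it is generated by the reduced brackets of \emph{all} closures), so you need not invoke Proposition~1.1 there; Proposition~1.1 covers the even and odd cases, as you say.
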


\begin{proof} If $L$ 
is
 a closure of  $\G$, $A^j \langle L \rangle' \in I_\G$ for all $j \in \mathbb{Z}$. So for some $j$,
 $$\det(L)= \omega^j \Omega (\langle L \rangle') =\Omega (A^j \langle L \rangle') \in \Omega(I_\G).$$
 As $\det(L) \in \mathbb{Z}$, $\det(L) \in  \Omega(I_\G) \cap \mathbb{Z}$. The other statements are proved similarly.
\end{proof}

Let $\langle n \rangle _\mathbb{Z}$
 denote the $\mathbb{Z}$-ideal generated by $n$.
For the ideals computed in the examples above, noting that $\Omega(A^4)=-1$, we have: 

\begin{equation} \Omega(\od{\A}) \cap \mathbb{Z} = \Omega(\langle 9,4+A^4 \rangle) \cap \mathbb{Z} =\Omega(\langle 3 \rangle) \cap \mathbb{Z}=\langle 3 \rangle_\mathbb{Z}.\label{3}\end{equation}
\begin{equation*}\Omega(\ev{\e}) \cap \mathbb{Z}= \Omega(\langle 9,4+A^4 \rangle) \cap \mathbb{Z}=\Omega(\langle 3 \rangle) \cap \mathbb{Z}=\langle 3 \rangle_\mathbb{Z}.\end{equation*}
\begin{equation*} \Omega(\ev{\D}) \cap \mathbb{Z}= \Omega(\langle 9,-2+A^4 \rangle) \cap \mathbb{Z}=\Omega(\langle 3 \rangle) \cap \mathbb{Z}= \langle 3 \rangle_\mathbb{Z}.\end{equation*}
\begin{equation*} \Omega(\od{\e}) \cap \mathbb{Z}= \Omega(\langle 5,1+A^4 \rangle) \cap \mathbb{Z}=\Omega(\langle 5 \rangle) \cap \mathbb{Z}= \langle 5 \rangle_\mathbb{Z}.\end{equation*}

 The first sentence in  
Proposition \ref{omeg2} 
has the same content as  \cite[Theorem 1.3]{ab}.  

\begin{proposition}\label{omeg2}
Let $L$ 
be
 an odd closure of  $\A$, then $\det(L) \equiv 0 \pmod{3}$.
Let $L$ 
be
 an even closure of  $\D$, then $\det(L) \equiv 0 \pmod{3}$.
If $L$ is an odd closure of $\e$, $\det(L) \equiv 0 \pmod{5}$.
If $L$ is an even closure of $\e$, $\det(L) \equiv 0 \pmod{3}$.
\end{proposition}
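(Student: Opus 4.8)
The plan is to derive Proposition \ref{omeg2} as a direct corollary of Proposition \ref{omeg} together with the four displayed computations immediately preceding it. The key observation is that Proposition \ref{omeg} already tells us that for an even closure $L$ of $\D$ we have $\det(L) \in \Omega(\ev{\D}) \cap \mathbb{Z}$, and similarly for the odd closures of $\A$ and the even and odd closures of $\e$. So the entire content of the proof is to evaluate the right-hand sides, i.e.\ to compute $\Omega(\ev{\D}) \cap \mathbb{Z}$, $\Omega(\od{\A}) \cap \mathbb{Z}$, $\Omega(\ev{\e}) \cap \mathbb{Z}$, and $\Omega(\od{\e}) \cap \mathbb{Z}$ as $\mathbb{Z}$-ideals.

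First I would recall from the example section the explicit generating sets: $\ev{\D} = \langle 9, -2 + A^4\rangle$ (note the sign convention $-9$ versus $9$ is irrelevant for the ideal), $\od{\A} = \langle 9, 4 + A^4\rangle$, $\ev{\e} = \langle 5, 1 + A^4\rangle$, and $\od{\e} = \langle 9, 4 + A^4\rangle$. Then I would apply the ring epimorphism $\Omega: \laurent \to \mathbb{Z}[\omega]$ with $\Omega(A) = \omega = e^{\pi i/4}$, hence $\Omega(A^4) = \omega^4 = -1$. Thus $\Omega(-2 + A^4) = -3$, $\Omega(4 + A^4) = 3$, and $\Omega(1 + A^4) = 0$. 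Therefore $\Omega(\ev{\D}) = \langle 9, 3\rangle = \langle 3\rangle$ as an ideal of $\mathbb{Z}[\omega]$; likewise $\Omega(\od{\A}) = \Omega(\od{\e}) = \langle 9, 3\rangle = \langle 3\rangle$, and $\Omega(\ev{\e}) = \langle 5, 0\rangle = \langle 5\rangle$, all ideals of $\mathbb{Z}[\omega]$. These are exactly the four displayed equations in the excerpt.

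Next I would intersect with $\mathbb{Z}$. Since $\langle 3\rangle \cap \mathbb{Z} = \langle 3\rangle_\mathbb{Z}$ and $\langle 5\rangle \cap \mathbb{Z} = \langle 5\rangle_\mathbb{Z}$ — here one uses that $\mathbb{Z}[\omega]$ is the ring of integers of $\mathbb{Q}(\omega)$ (the eighth cyclotomic field), or more elementarily that $\mathbb{Z}[\omega]$ is a free $\mathbb{Z}$-module with $\mathbb{Z}$ as a direct summand, so $n\mathbb{Z}[\omega] \cap \mathbb{Z} = n\mathbb{Z}$ for any integer $n$ — we conclude $\Omega(\ev{\D})\cap\mathbb{Z} = \langle 3\rangle_\mathbb{Z}$, and similarly for the other three. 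Combining with Proposition \ref{omeg}: for an even closure $L$ of $\D$, $\det(L) \in \langle 3\rangle_\mathbb{Z}$, i.e.\ $\det(L) \equiv 0 \pmod 3$; for an odd closure of $\A$, $\det(L)\equiv 0\pmod 3$; for an even closure of $\e$, $\det(L)\equiv 0\pmod 3$; and for an odd closure of $\e$, $\det(L)\equiv 0\pmod 5$. This is precisely the statement of Proposition \ref{omeg2}.

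I expect no genuine obstacle here: the heavy lifting is all in Proposition \ref{omeg} (which I am permitted to assume) and in the ideal computations carried out in the examples. The only point requiring a word of care is the elementary fact that contracting $n\mathbb{Z}[\omega]$ to $\mathbb{Z}$ gives exactly $n\mathbb{Z}$, which follows since $1$ extends to a $\mathbb{Z}$-basis of $\mathbb{Z}[\omega]$; everything else is a one-line substitution $A^4 \mapsto -1$.
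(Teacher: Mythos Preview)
Your approach is identical to the paper's: apply Proposition~\ref{omeg} and evaluate the four intersections $\Omega(\cdot)\cap\mathbb{Z}$ via the substitution $A^4\mapsto -1$. There is, however, a bookkeeping slip in your last step. You correctly record $\ev{\e}=\langle 5,1+A^4\rangle$ and $\od{\e}=\langle 9,4+A^4\rangle$ from \S\ref{subsection:85ex} and correctly compute $\Omega(\ev{\e})\cap\mathbb{Z}=\langle 5\rangle_\mathbb{Z}$ and $\Omega(\od{\e})\cap\mathbb{Z}=\langle 3\rangle_\mathbb{Z}$; but your concluding sentence then interchanges these, asserting that even closures of $\e$ give $\det(L)\equiv 0\pmod 3$ and odd closures give $\det(L)\equiv 0\pmod 5$, which is the reverse of what your own calculation yields. (Your claim that ``these are exactly the four displayed equations in the excerpt'' is likewise not quite right: the paper's displayed lines in \S\ref{section:determinant} assign $\langle 9,4+A^4\rangle$ to $\ev{\e}$ and $\langle 5,1+A^4\rangle$ to $\od{\e}$, the opposite of what \S\ref{subsection:85ex} states---so the paper itself has an even/odd swap for $\e$ between those two sections, and your computation actually disagrees with two of the four displayed lines.) The mathematical content of your argument, including the justification that $n\mathbb{Z}[\omega]\cap\mathbb{Z}=n\mathbb{Z}$, is sound; only the final labeling of the two $\e$ cases needs to be made consistent with your computation.
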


\begin{proof}
If
$L$ 
is
 an odd closure of  $\A$, by Proposition \ref{omeg}, $\det(L) \in \Omega(\od{\A}) \cap \mathbb{Z}$.
By (\ref{3}), $\det(L) \equiv 0 \pmod{3}$. The other statements are proved similarly.
\end{proof}
 
Note the tangle $\f$ (pictured below)
  was shown in \cite{abkb}
to have $I_\f= \langle 11,4-A^4 \rangle$.   Thus 
$\Omega (I_\f) \cap Z= \langle 11,5 \rangle_\mathbb{Z} =\langle1\rangle_\mathbb{Z}
=\mathbb{Z}$.

\begin{figure}[h]
\includegraphics[height=1.4in]{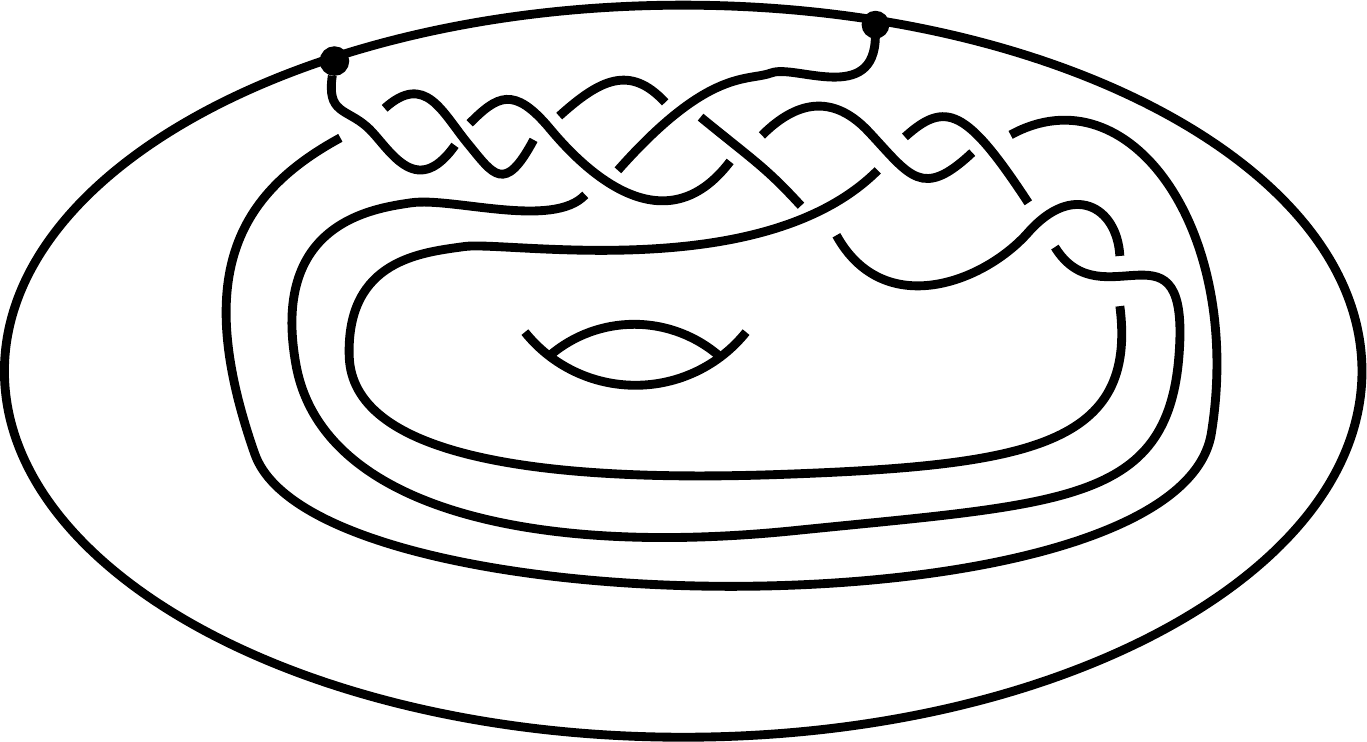}
\caption{The genus-1 tangle $\f$.}\label{fig:tanglef}
\end{figure}


\begin{thebibliography}{BHMV}


\bibitem[A]{ab} S.M. Abernathy. On Krebes's tangle. {\it Topology Appl.} {\bf 160} (2013)  1379-1383.


\bibitem[A2]{abkb} S.M. Abernathy.  The Kauffman bracket ideal for genus-1 tangles, {\it J. Knot Theory Ramifications} \textbf{24} (2015), no. 2, 1550007.

\bibitem[A3]{thesis} S.M. Abernathy.  Obstructions to Embedding Genus-1 Tangles in Links, Thesis (Ph.D.), Louisiana State University (2014).
\url{http://etd.lsu.edu/docs/available/etd-07042014-141943/ }

\bibitem[BHMV]{bhmv92} C. Blanchet, N. Habegger, G. Masbaum, and P. Vogel. Three manifold invariants derived from the Kauffman bracket, {\it Topology} {\bf 31} (1992) 685--699.



\bibitem[GH]{gh} P.M. Gilmer and J. Harris.  On the Kauffman bracket skein module of the quaternionic manifold, {\it J. Knot Theory Ramifications} {\bf 16} (2007) 103--125. 



\bibitem[H]{hab} K. Habiro.  On the colored Jones polynomials of some simple links, {\it S\={u}rikaisekikenky\={u}sho K\={o}ky\={u}roku} {\bf 1172} (2000) 34--43.



\bibitem[HP]{hp}  J. Hoste, J.H. Przytycki. The Kauffman bracket skein module of $S^1 \times S^2$, {\it Math. Zeit.} {\bf 220} (1995), 65--73

\bibitem [J]{J}  V. F. R. Jones,  A polynomial invariant for knots via von Neumann algebras. { ]it Bull. Amer. Math. Soc. (N.S.)} {\bf 12} (1985), no. 1, 103Ð111

\bibitem[K]{K}  Louis H. Kauffman, State models and the Jones polynomial. Topology 26 (1987), no. 3, 395Ð407

\bibitem[KL]{kl} L.H. Kauffman and S.L. Lins, {\it Temperley-Lieb Recoupling Theory and Invariants of 3-manifolds} (Princeton University Press, Princeton, NJ, 1994).

\bibitem[K]{kr} D.A. Krebes, An obstruction to embedding 4-tangles in links, {\it J. Knot Theory and its Ramifications} \textbf{8} (1999) 321--352.

\bibitem[L]{l} W.B.R. Lickorish, {\it An Introduction to Knot Theory} (Springer, 1997).

\bibitem[MV]{mv} G. Masbaum and P. Vogel.  3-valent graphs and the Kauffman bracket.  {\it Pacific J. Math.} \textbf{164} (1994)  361--381.


\bibitem[P]{pr} J.H. Przytycki, Fundamentals of Kauffman bracket skein modules, {\it Kobe J. Math} (1999) 45--66.


\bibitem[PSW]{psw} J.H. Przytycki, D.S. Silver and S.G. Williams, 3-manifolds, tangles, and persistent invariants, 
{\em Math. Proc. Camb. Phil. Soc.} {\bf 139} (2005) 291--306.


\end{thebibliography}
\end{document}